\newtheorem{proposition}{Proposition}
\newtheorem{theorem}{Theorem}
\newtheorem{lemma}{Lemma}
\newtheorem{remark}{Remark}
\newtheorem{definition}{Definition}
\newtheorem{assumption}{Assumption}
\newtheorem{corollary}{Corollary}
\newtheorem{example}{Example}
\DeclareMathOperator*{\argmin}{argmin} 
\begin{document}

\title{\LARGE \bf Polytope Volume Monitoring Problem: Formulation and Solution via 
  Parametric Linear Program Based Control Barrier Function} 
 \author{Shizhen Wu, Jinyang Dong, Xu Fang, Ning Sun, and Yongchun Fang$^*$ 
\thanks{This work was done when S. Wu was visiting and X. Fang was working at Nanyang Technological University. ($^*$Corresponding author: Yongchun Fang)}
\thanks{S. Wu, J. Dong, N. Sun and Y. Fang are with the Institute of Robotics and Automatic Information
  System, College of Artificial Intelligence, Nankai University, Tianjin 300353,
  China, and also with the Institute of Intelligence Technology and Robotic
  Systems, Shenzhen Research Institute of Nankai University, Shenzhen 518083,
  China (E-mail:\{szwu; dongjinyang\}@mail.nankai.edu.cn; \{sunn; fangyc\}@nankai.edu.cn).}
\thanks{X. Fang is with  Key Laboratory of Intelligent Control and Optimization for Industrial Equipment of Ministry of Education, Dalian University of Technology, Dalian, 116024, China (E-mail: xufang@dlut.edu.cn).}
}

\maketitle
\thispagestyle{empty}
\pagestyle{empty}

\begin{abstract}
 Motivated by the latest research on feasible space monitoring of multiple control barrier functions (CBFs) as well as polytopic collision avoidance, this paper studies the Polytope Volume Monitoring  (PVM) problem, whose goal is to design a control law for inputs of nonlinear systems to prevent the volume of some state-dependent polytope from decreasing to zero. Recent studies have explored the idea of applying Chebyshev ball method in optimization theory to solve the case study of PVM; however, the underlying difficulties caused by nonsmoothness have not been addressed. This paper continues the study on this topic, where our main contribution is to establish the relationship between nonsmooth CBF and parametric optimization theory through directional derivatives for the first time, to solve PVM problems more conveniently. In detail, inspired by Chebyshev ball approach, a parametric linear program (PLP) based nonsmooth barrier function candidate is established for PVM, and then, sufficient conditions for it to be a nonsmooth CBF are proposed, based on which a quadratic program (QP) based safety filter with guaranteed feasibility is proposed to address PVM problems. Finally, a numerical simulation example is given to show the efficiency of the proposed safety filter. 
  \end{abstract}
  

\section{Introduction}\label{sec:introduction}
 The control barrier function (CBF) based control design method is popular for safety-critical systems\cite{ames2019control}.  CBF-based controllers are typically implemented  pointwisely in the
form of quadratic program (QP), where CBF conditions and input constraints are incorporated as linear inequalities. Due to its advantages of low computational complexity and formal safety guarantees, CBF methods are successfully 
 applied in various autonomous robotic systems \cite{wabersich2023data, garg2024advances, shi2025distributed
 }. 
    
The appearance of  multiple CBFs  and input constraints would impose great challenges in the feasibility  of the resulting QP-based control laws, which has been paid much attention recently \cite{glotfelter2017nonsmooth, xu2018constrained,  xiao2022sufficient,  isaly2024feasibility, parwana2023feasible}. Some primary works study sufficient conditions of the compatibility of  multiple CBFs \cite{xu2018constrained, xiao2022sufficient}. More recently, in \cite{isaly2024feasibility}, the nonsmooth CBF, dating backing to \cite{glotfelter2017nonsmooth}, is introduced as a systematic tool to address challenges in guaranteeing the feasibility and continuity. Meanwhile, a simpler way of addressing the feasibility problem appears in \cite{parwana2023feasible}. In detail,  ``\emph{Feasible CBF Space}'' is defined to describe the state-dependant polytope formulated by CBF conditions and input constraints. Then, ``\emph{Feasible Space Monitoring}'' (FSM) problem is established to monitor the feasibility of the resulting QP-based control laws, for which Chebyshev ball and inscribing ellipsoid methods are proposed to build parametric convex program based CBFs.  For engineering implementations, the differentiable optimization toolbox is applied  to approximate the gradient of the resulting  (nonsmooth implicit) barrier function candidate by some heuristic gradient values. As concluded in \cite{parwana2023feasible}, the nonsmooth control theoretic considerations are left for future work. To the best of our knowledge, this problem has been open until now.  Such observation motivates this paper. 
  
Meanwhile, researchers have been gradually interested in building CBFs via parametric optimization-defined metrics for other problems, such as network connectivity maintenance \cite{ong2023nonsmooth} and  polytopic collision avoidance (PCA) \cite{dai2023safe, wei2024collision, thirugnanam2025control,    wu2025optimization,  thirugnanam2022duality, thirugnanam2023nonsmooth}. However, it is difficult to extend these results to solve the above open problem in FSM. Roughly speaking, there are three potential ways, yet all meet significant difficulties: 
1) In \cite{ong2023nonsmooth}, the Laplacian's eigenvalue is chosen as a barrier function candidate, then, a nonsmooth CBF condition in the generalized gradient form is formulated. 
The idea of building nonsmooth CBF conditions based on generalized gradient can only be applied to specific problems, and it is not applicable to other parametric optimization problems.  
2) In \cite{thirugnanam2022duality, thirugnanam2023nonsmooth}, the minimum distance field for PCA, i.e., a parametric quadratic program defined metric, is used to formulate a nonsmooth CBF condition based on the directional derivative of the KKT solution. How to extend it to other problems is also not obvious. 
3) Recently, the nonsmoothness in PCA  has been avoided  by circumscribing polytopes  into strictly convex shapes to construct differentiable optimization defined CBFs \cite{dai2023safe, wei2024collision, thirugnanam2025control} or by proposing some nonconservative approximation of signed distance field to establish the optimization-free smooth CBF \cite{wu2025optimization}. 
This indirect approach avoids the challenges of nonsmooth parametric optimization, transforming the original simple LP into a nonlinear program. 
In summary, the existing works about parametric optimization defined CBF methods \cite{ong2023nonsmooth, dai2023safe, wei2024collision, thirugnanam2025control,    wu2025optimization,  thirugnanam2022duality, thirugnanam2023nonsmooth} could not be directly applied to address the difficulties of nonsmoothness in FSM problems. 
 
This paper extends the study of Feasible Space Monitoring \cite{parwana2023feasible} and addresses the unresolved difficulties in nonsmooth parametric optimization based CBFs  \cite{ong2023nonsmooth, dai2023safe, wei2024collision, thirugnanam2025control,  wu2025optimization,  thirugnanam2022duality, thirugnanam2023nonsmooth}. First, a broader class of control problems is formulated, called Polytope Volume Monitoring  (PVM). By identifying this kind of problems, 
FSM problem \cite{parwana2023feasible}  and Polytopic Intersectability Maintenance, i.e., the opposite problem of PCA \cite{dai2023safe, wei2024collision, thirugnanam2025control, wu2025optimization,  thirugnanam2022duality}, can be regarded as different special examples of PVM. 
This generalization enhances the applicability of the proposed method to a wider range of problems.
Second, to address difficulties in analysis and synthesis caused by inherent nonsmoothness, this paper attempts to establish a bridge between the directional-derivative-defined nonsmooth CBF \cite{wiltz2023construction}  and  parametric optimization theory \cite{still2018lectures}. 
In detail, our main contributions can be summarized as follows: 
\begin{itemize} 
\item [1)] Compared with 
existing works \cite{ong2023nonsmooth, dai2023safe, wei2024collision, thirugnanam2025control,  wu2025optimization,  thirugnanam2022duality, thirugnanam2023nonsmooth}, 
this paper gives a direct way to analyze and synthesize parametric optimization defined nonsmooth CBFs
by merging \cite{wiltz2023construction} and \cite{still2018lectures} into a unified framework for the first time.
\item [2)] Within this unified framework, a parametric linear program based nonsmooth barrier function candidate is established for PVM, based on which sufficient conditions of being a nonsmooth
CBF are proposed. Then a a feasibility-guaranteed  QP-based safety filter is proposed to solve the PVM problem. The above nonsmooth analysis and considerations are not addressed in \cite{parwana2023feasible}.
\item [3)] The efficiency of the  proposed safety filter is validated in a numerical simulation where a vehicle is driven to perform safe navigation, i.e.,  a reach-avoid task under multiple obstacles
and input constraints.
\end{itemize} 
 
The paper is organized as follows. The CBF method is briefly reviewed in Section \ref{sec:background}, and problem formulation and preliminary analysis are given in Section \ref{sec:problem}. Section \ref{sec:synthesizing} reports the main results. The referred nonsmooth parametric optimization theory is included in Appendix. 

\emph{Notations:} Throughout this paper, $\operatorname{col}(x_1,x_2):=[x_1^{\top},x_2^{\top}]^{\top}$ for any two column vectors $x_1, x_2$. For any vector $x$, $\|x\|$ denotes its Euclidean norm. $|\mathcal{J} |$ denotes the cardinality of  any finite number set $\mathcal{J}$. $0$ and $\emptyset$ denote the zero vector and empty set respectively in suitable dimension. 
The interior and boundary of a set $\mathcal{C}$ are indicated by $\operatorname{Int}(\mathcal{C})$ and $\partial \mathcal{C}$. A continuous function $\alpha: \mathbb{R} \rightarrow \mathbb{R}$ 
is called in extended class-$\mathcal{K}$ if it is strictly increasing and with $\alpha(0) = 0$ \cite{ames2019control}. For any function $f:\mathbb{R}^{n} \rightarrow \mathbb{R}$, if exists, its generalized gradient at $x$ is denoted by $\partial f(x)$, and  $ f' (x; d)$ denotes  its directional derivative at $x$ along direction $d$. For brevity, its partial derivative $\frac{\partial f}{\partial x}(x)$,  expressed as a row vector,  is denoted by $\partial_{x} f$ sometimes. 

\section{Preliminaries: Control Barrier Function}\label{sec:background}

Consider a control-affine nonliear system
\begin{equation} \label{eq:N-affine-systems}
\dot{x}(t) =f_{0}(x(t),u(t)):=   f(x(t)) + g(x(t))u(t),  
\end{equation}
where $x\in \mathbb{R}^{n} $ is state,  $u\in \mathcal{D}_{u}\subset \mathbb{R}^{m} $ is control input,  and $f,g$  are Lipschitz continuous. Next, the nonsmooth barrier function (NBF) candidate   \cite{glotfelter2017nonsmooth}  is  introduced. 

\begin{definition}\label{def:NBFcandidate}
Given a nonempty closed set $\mathcal{C} \subset  \mathbb{R}^n$ with no isolated point, 
a  locally Lipschitz continuous  function $h(\cdot): \mathbb{R}^n \rightarrow \mathbb{R}$ is called a NBF candidate on set $\mathcal{C}$ if 
\begin{equation} \label{eq:a-safe-set}
\begin{aligned}
h(x) &=0,~ \forall x\in \partial \mathcal{C}; \quad h(x) >0,~ \forall x\in \operatorname{Int}(\mathcal{C});  \\  
h(x) &<0,~ \forall x\in \mathbb{R}^n/\mathcal{C}. 
\end{aligned}
\end{equation} 
\end{definition}

 For the demands of this paper,  we 
 only require from now on that $ h $ is locally Lipschitz continuous and  the (one-sided) directional derivative of $h(x)$ at point $x$ along  any  direction $d$ exists, i.e., the following limit is finite and abbreviated as 
$$ 
 h'(x;d) := \lim_{\sigma \rightarrow 0^{+}} \frac{h(x+\sigma d) - h(x) }{\sigma}. 
$$
By doing this, one can relax the differentiability of $ h $ and  define a broader class of CBFs as follows.

\begin{definition}\label{def:ncbf}
A locally Lipschitz continuous function $h: \mathbb{R}^n \rightarrow \mathbb{R}$ is a nonsmooth control barrier function (NCBF) for system \eqref{eq:N-affine-systems} if 
$h $ is a NBF candidate on the  closed set $\mathcal{C} \subset  \mathbb{R}^n$, and there exists 
 an extended class-$\mathcal{K}$ function $\alpha: \mathbb{R} \rightarrow \mathbb{R}$, 
  an open set $\mathcal{D}$ that satisfies  $\mathcal{C} \subset \mathcal{D} \subseteq \mathbb{R}^n$, 
such that:
\begin{align}
  \label{eq:cbf condition}
  \sup_{u\in \mathcal{D}_{u}} \left\{   h' \left(x;  f_0(x,u) \right)  \right\} \geq -\alpha(h(x)), \quad \forall x\in \mathcal{D}. 
\end{align}
\end{definition}

In \cite{wiltz2023construction}, the same limitation value as $ h'(x;f_0(x,u)) $ is used to define NCBF, yet without introducing the concept of directional derivative.
In this paper, the directional derivative defined  NCBF   \eqref{eq:cbf condition}
is introduced to build a bridge between the NCBF theory and the parametric optimization theory. 

\begin{lemma}\cite[Theorem 1]{wiltz2023construction}~\label{lem:NCBF}
  If $h$ is a NCBF for system \eqref{eq:N-affine-systems}, then the closed set $\mathcal{C}$ is forward control-invariant, that is,  
  there exists a measurable and locally bounded function $u(t)\in \mathcal{D}_{u}$, such that  $x_0 \in \mathcal{C}$ implies that $x(t) \in \mathcal{C},~\forall t \in\left[0, T\right]$  for every solution.
\end{lemma}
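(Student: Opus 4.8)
The plan is to select an admissible input that renders the superlevel set $\{x : h(x) \ge 0\}$ --- which by Definition \ref{def:NBFcandidate} equals $\mathcal{C}$ --- forward invariant, and then to verify invariance by tracking $t \mapsto h(x(t))$ along closed-loop solutions. For the selection: for each $x \in \mathcal{D}$, condition \eqref{eq:cbf condition} makes the set $K(x) := \{ u \in \mathcal{D}_{u} : h'(x; f_0(x,u)) \ge -\alpha(h(x)) \}$ nonempty (if the supremum is not attained I would take an $\varepsilon$-suboptimal input, which exists because $u \mapsto h'(x; f(x)+g(x)u)$ is Lipschitz, $h$ being locally Lipschitz). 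A measurable-selection argument applied to the multifunction $x \mapsto K(x)$ then produces a measurable, locally bounded signal $u(\cdot)$ with $u(t) \in K(x(t))$, and the closed loop $\dot x = f_0(x,u(t))$, being a Carath\'eodory differential equation, admits a locally absolutely continuous solution $x(\cdot)$ on some $[0,T]$ with $x(0) = x_0 \in \mathcal{C}$.

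The next step is a chain rule along the solution. Set $y(t) := h(x(t))$; this is locally absolutely continuous (composition of the locally Lipschitz $h$ with the locally absolutely continuous $x(\cdot)$), hence differentiable for a.e.\ $t$. The crux is the identity $\dot y(t) = h'(x(t); \dot x(t))$, valid at every $t$ that is simultaneously a Lebesgue point of $\dot x$ and a differentiability point of $y$ --- a full-measure set. At such $t$, absolute continuity gives $x(t+\sigma) = x(t) + \sigma\dot x(t) + o(\sigma)$ as $\sigma \to 0^+$, and a local Lipschitz bound gives $|h(x(t+\sigma)) - h(x(t)+\sigma\dot x(t))| \le L\,|x(t+\sigma) - x(t) - \sigma\dot x(t)| = o(\sigma)$; hence the one-sided difference quotient of $y$ at $t$ has the same limit as that of $\sigma \mapsto h(x(t)+\sigma\dot x(t))$, namely $h'(x(t);\dot x(t))$, which coincides with $\dot y(t)$ since $y$ is differentiable there.

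Combining this with the selection of $u(\cdot)$ yields, for a.e.\ $t \in [0,T]$, $\dot y(t) = h'(x(t); f_0(x(t), u(t))) \ge -\alpha(y(t))$. To conclude $y \ge 0$ I would argue directly, avoiding uniqueness issues of $\dot z = -\alpha(z)$ when $\alpha$ is not Lipschitz: if $y(\tau) < 0$ for some $\tau$, let $t_0 < \tau$ be the last time with $y(t_0) = 0$; then $y < 0$ on $(t_0,\tau]$, so $\alpha(y) < 0$ there (as $\alpha$ is strictly increasing with $\alpha(0)=0$), whence $\dot y \ge -\alpha(y) > 0$ a.e.\ on $(t_0,\tau)$ and $y(\tau) = \int_{t_0}^{\tau}\dot y\,ds > 0$, a contradiction. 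Thus $h(x(t)) = y(t) \ge 0$, i.e., $x(t) \in \mathcal{C}$, for all $t \in [0,T]$.

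The technical heart is the a.e.\ chain rule $\dot y(t) = h'(x(t);\dot x(t))$: this is where local Lipschitzness of $h$ and the assumed existence of one-sided directional derivatives enter essentially, and it is precisely what lets a directional-derivative NCBF condition be used in place of a gradient-based one. The second delicate point is the input construction: since $x \mapsto h'(x;f_0(x,u))$ need not be continuous, the supremum in \eqref{eq:cbf condition} need not be attained, and the selected input must be locally bounded, so one needs measurable-selection machinery (e.g.\ Kuratowski--Ryll-Nardzewski) together with near-optimal selections. The remaining steps --- absolute continuity of $y$ and the elementary last-exit-time argument --- are routine.
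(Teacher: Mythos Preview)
The paper does not supply its own proof of this lemma: it is stated as a citation of \cite[Theorem 1]{wiltz2023construction} and used as a black box. So there is no in-paper argument to compare against. Your outline follows the natural route one would expect such a proof to take --- select an admissible input pointwise from the NCBF condition, establish the a.e.\ chain rule $\dot y(t)=h'(x(t);\dot x(t))$ for the composition $y=h\circ x$, and conclude by a last-exit-time comparison argument. The chain-rule step and the final invariance step are correctly argued.

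There is, however, a genuine gap in the control-construction step. You apply a measurable selection to the multifunction $x\mapsto K(x)$ and claim this yields ``a measurable, locally bounded signal $u(\cdot)$ with $u(t)\in K(x(t))$'' for which the closed loop is a Carath\'eodory ODE. But measurable selection gives you a \emph{state feedback} $\kappa:\mathcal{D}\to\mathcal{D}_u$, not an open-loop signal; the signal $u(t)=\kappa(x(t))$ only becomes available once $x(\cdot)$ exists, and to obtain $x(\cdot)$ you must solve $\dot x=f_0(x,\kappa(x))$, whose right-hand side is merely measurable in $x$. Standard Carath\'eodory existence requires continuity in $x$ and measurability in $t$, so it does not apply here; one typically needs either a Filippov/Krasovskii regularization, a sample-and-hold construction, or additional structure (e.g.\ upper semicontinuity of $K$ with closed convex values) to get solutions and then recover the differential inequality. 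Relatedly, your handling of the case where the supremum in \eqref{eq:cbf condition} is not attained is incomplete: taking an $\varepsilon$-suboptimal input yields only $\dot y\ge -\alpha(y)-\varepsilon$ a.e., and your last-exit-time argument breaks down for fixed $\varepsilon>0$ (it would allow $y$ to dip slightly below zero). These are exactly the points where the cited reference does nontrivial work, and you should either invoke that machinery explicitly or restrict to the case where $\mathcal{D}_u$ is compact so the supremum is attained by continuity of $u\mapsto h'(x;f(x)+g(x)u)$.
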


When $h(x)$ is differentiable, it holds that $h'(x; f_0(x,u))=\frac{\partial h}{\partial x}(x) \left( f(x)+g(x)u  \right)$, which is a linear function of input $u$ at every $x$, and the following  
 is a  QP-defined control law:  
\begin{subequations}\label{eq:uxargmin}
\begin{align} 
u^{*}(x) =\argmin_{u\in \mathcal{D}_{u}}~ & \| u - u_{0} \|^2,
\label{eq:uxargmin1} \\
 \text{ s.t. }  &    h' \left(x;  f_0(x,u) \right)  \geq -\alpha(h(x)), \label{eq:uxargmin2}
\end{align}
\end{subequations}
where $u_{0}$ is any given continuous nominal control law. 
When $h(x)$ is no longer differentiable but the directional derivative  $h'(x; f_0(x,u))$ exists,   
the inequality \eqref{eq:uxargmin2} may no longer be  a linear constraint of input $u$.  

\begin{remark}\label{eq:supuUzfg}
Different from  \eqref{eq:cbf condition}, the more popular NCBF is defined by the set-valued Lie derivative in \cite{glotfelter2017nonsmooth}, \cite{glotfelter2020nonsmooth, ong2023nonsmooth}:
\begin{align}
  \sup_{u\in \mathcal{D}_{u}} \left\{   \min_{\zeta\in \partial h(x) }  \zeta^{\top} f_0(x,u)    \right\} \geq -\alpha(h(x)), \quad \forall x\in \mathcal{D},  
  \label{eq:uinUminpart}
\end{align}
where $\partial h(x)$ denotes the generalized gradient of $h$ at $x$. Other similar NCBFs defined by $\partial h(x)$ can be found in \cite{ghanbarpour2022optimal}. Condition \eqref{eq:uinUminpart} is suitable for the case that  $\partial h(x)$ could be easily calculated or estaimated. The following context will show a typical case that $\partial h(x)$ cannot be obtained obviously, while the directional derivative $h'(x; \cdot)$ is available.  $\hfill\square$
\end{remark}

\begin{figure*}[!htp]
  \centering
  \subfloat[Polytope volume monitoring problem and examples. ]{\includegraphics[width=0.4\textwidth]{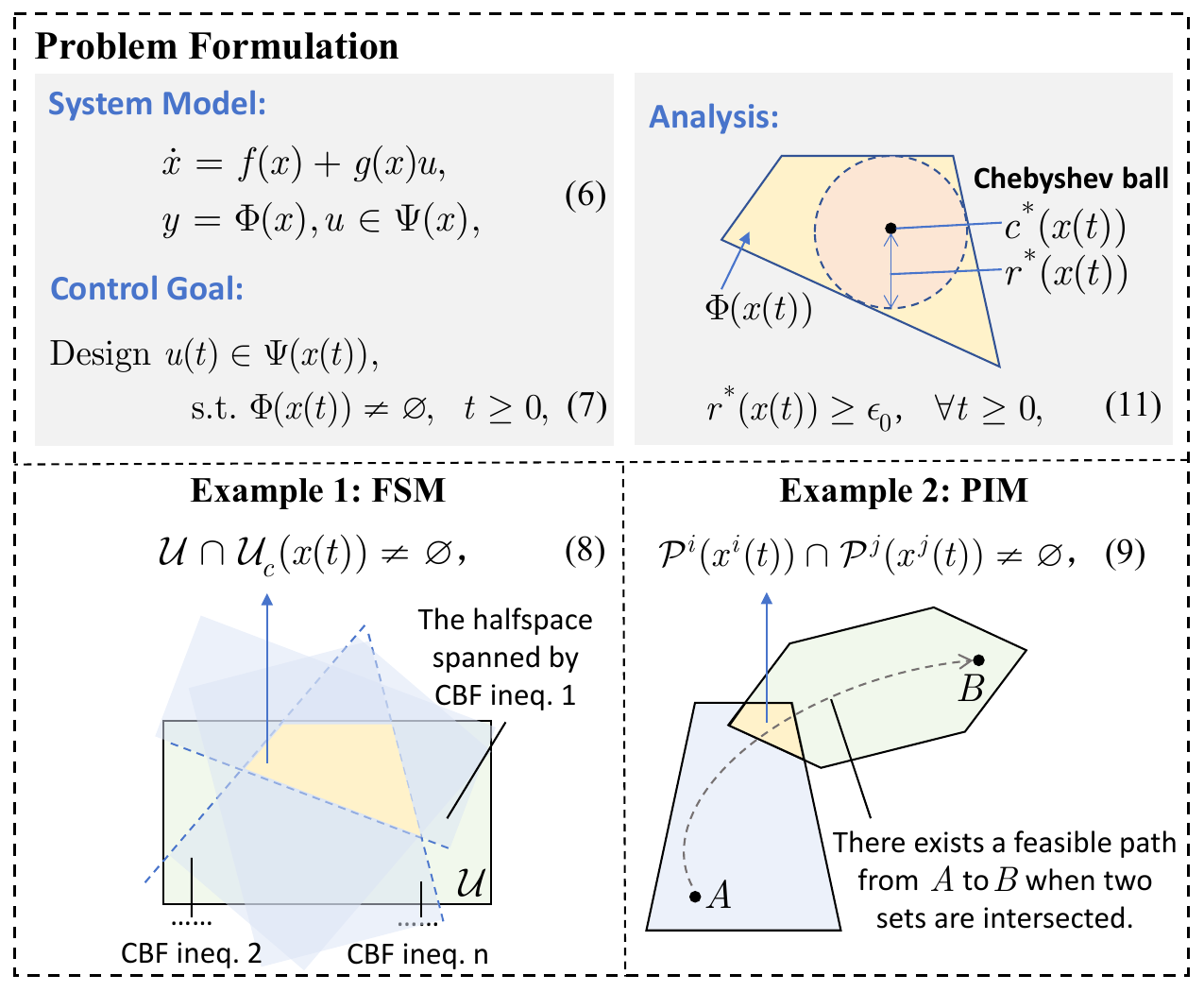} \label{fig:Figure-new}}
  \subfloat[The structure of the proposed safety filter.]{\includegraphics[width=0.588\textwidth]{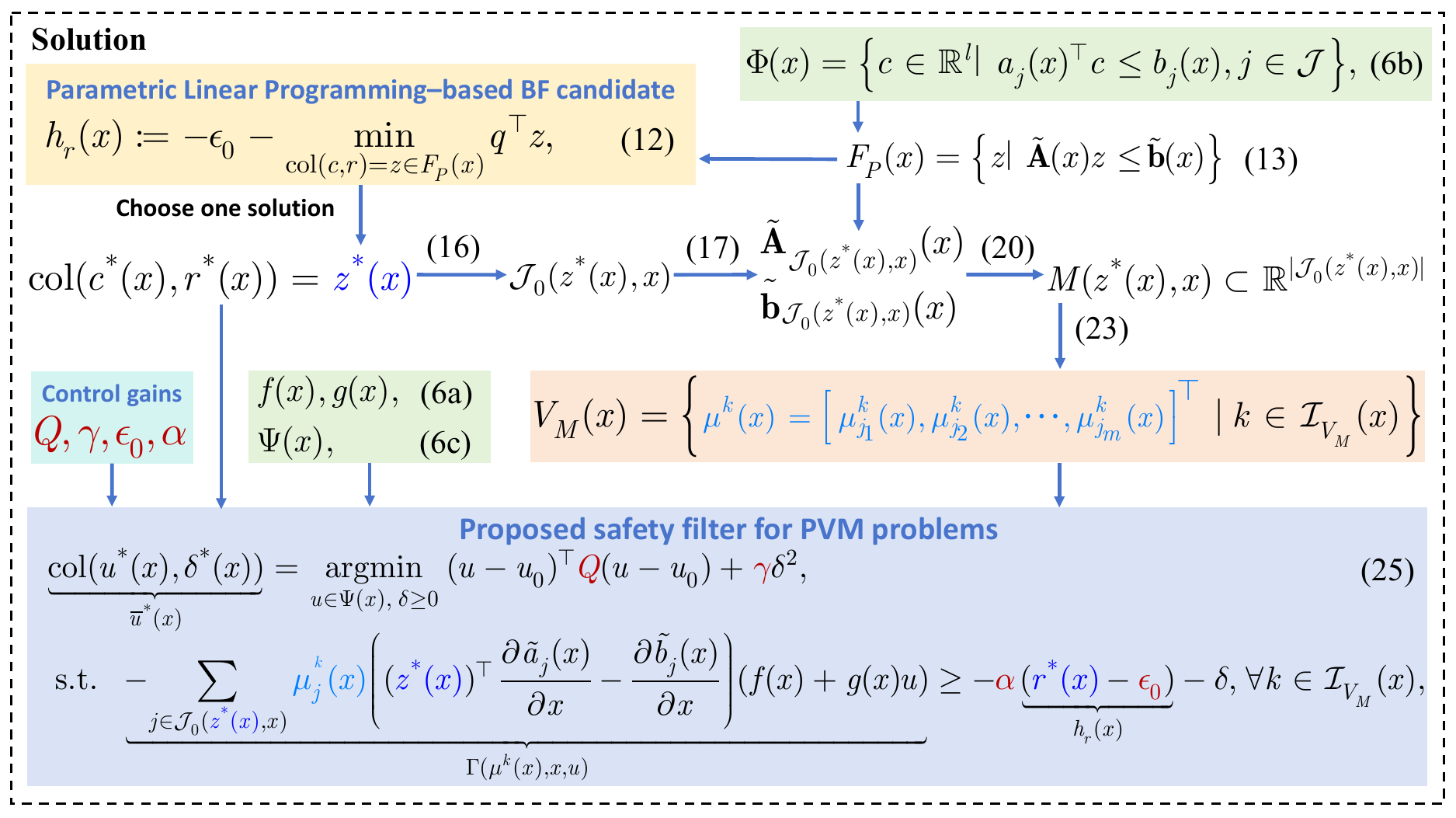} \label{fig:Figure-problem} }
  \caption{A summary of the polytope volume monitoring problem and the parametric linear program based control barrier function.}
  \label{fig:Figure-problem-solution}
    \vspace{-0.6cm}
\end{figure*}

\section{Problem Formulation}\label{sec:problem}
\subsection{Problem Statement}\label{sec:problem-statement}

For simplicity of discussions, consider the control-affine system  \eqref{eq:N-affine-systems} with a  polytopic set-valued output and a polytopic input constraint:
\begin{subequations}\label{eq:N-affine-input-output}
 \begin{align} 
  \dot{x} &=  f(x) + g(x)u, \label{eq:N-affine-systems2}  \\
  y &= \Phi(x) := \left\{ c \in  \mathbb{R}^{l} \mid a_{j}^{\top}(x) c \leq b_{j}(x), j\in \mathcal{J} \right\},   \label{eq:N-affine-output} \\
  u &\in \Psi(x) :=\left\{ u \in \mathbb{R}^{m} \mid    \mathfrak{a}_{\mathfrak{i}}^{\top}(x) u \leq \mathfrak{b}_{\mathfrak{i}}(x), \mathfrak{i}\in \mathfrak{J}  \right\},   \label{eq:N-affine-input} 
 \end{align}  
\end{subequations} 
 where $\mathcal{J}:=\{1,2,...,N\}$ and $\mathfrak{J}:=\{1,2,...,\mathfrak{N} \}$. 
For convenience, the functions appearing in definitions of $\Phi(x)$ and $\Psi(x)$ are assumed to be continuously differentiable, and without loss of generality,  the following assumptions are imposed to develop an implementable controller. 

\begin{assumption}\label{ass:1}
The  polytopic input constraint  set  $\Psi(x)$ is always nonempty and the vectors $a_{j}(x),j\in \mathcal{J}$ are  always nonzero. The state-dependent polytopic set $\Phi(x)$ is uniformly bounded, i.e.,  there exists a compact set $ \mathcal{D}_{y} \subset \mathbb{R}^l $, such that $\Phi(x)\subseteq \mathcal{D}_{y},\forall x\in \mathbb{R}^{n} $. 
\end{assumption}

It is worth noting that  any emptyset is naturally bounded, thus it is not required in Assumption \ref{ass:1}  that $\Phi(x)$ is always nonempty.  Instead, we allow there exist some points $x$ at which $\Phi(x) = \emptyset$.
In fact, this paper will attempt to synthesize a controller to prevent the interior of  $\Phi(x(t))$ from vanishing, as formally stated below.
 

\textbf{Polytope Volume Monitoring Problem:} Consider the system \eqref{eq:N-affine-input-output} with a polytopic set-valued output $\Phi(x)$ and a polytopic input constraint set $\Psi(x)$. The goal is to design a control law for $u$ to prevent  the volume of $\Phi(x(t))$ from decreasing to zero, i.e., to ensure the nonemptness of  the interior of $\Phi(x(t))$: 
\begin{align}
x(t) \in \mathcal{A} := \{ x \mid \operatorname{Int} (\Phi(x)) \neq \emptyset \}, \quad \forall t\geq0. \label{eq:def-nonempty}
\end{align}

There are some typical situations where the  state-dependant polytope $\Phi(x(t))$ might be empty if the input $u$ is designed improperly. 

\begin{example}[Feasible Space Monitoring \cite{parwana2023feasible}]\label{exa1} 
In Fig. \ref{fig:Figure-problem-solution} (a),  given the polytopic input constraint set $\mathcal{U} $, by constructing the set composed of multiple differentiable CBFs $h_i, i\in \mathcal{I}$: 
$\mathcal{U}_{c}(x):=\{ u \mid \frac{\partial h_{i}}{\partial x}(x) \left( f(x)+g(x)u  \right) \geq -\alpha_{i}(h_{i}(x)), i\in \mathcal{I}\}$, 
the FSM problem aims to design $u$ such that
\begin{equation}
  \mathcal{U} \cap \mathcal{U}_{c}(x(t)) \neq \emptyset, ~\forall t\geq0. \label{eq:UUcnonempty}
\end{equation}   
By choosing $\Phi(x)= \Psi(x)= \mathcal{U} \cap \mathcal{U}_{c}(x) $, FSM of any QP-based controllers can be regarded as the case studies of the formulated PVM problem. 
\end{example}
 
In addition to the above examples in input space, PVM problems also appear in state space. 
In many applications, it is necessary to maintain some kind of connectivity of two sets,  for instance, a person standing on vehicle $i$ needs to transfer to vehicle $j$. During this process, $\mathcal{P}^i(x^i)$ and  $\mathcal{P}^j(x^j)$ should be connected, as shown in \eqref{eq:PiPjnonempty}.

\begin{example}[Polytopic Intersectability Maintenance]\label{exa2} 
Consider two standard nonholonomic vehicles $i,j$, for each $\mathfrak{i} \in \{i,j\}$,  
$x^\mathfrak{i}=[{p}_x^\mathfrak{i},{p}_y^\mathfrak{i}, {\theta}^\mathfrak{i}]^{\top}$ 
and $ u^\mathfrak{i} =[ v^\mathfrak{i}, \omega^\mathfrak{i}]^{\top}\in \mathcal{U}^{\mathfrak{i}}$ are state and input vectors respectively, where $\mathcal{U}^{\mathfrak{i}}$ denotes a hyperrectangle.  
Let  $\mathcal{P}^i(x^i), \mathcal{P}^j(x^j)$ denote their physical domain or sensor ranges, as in Fig. \ref{fig:Figure-problem-solution} (a).  The goal of PIM is to maintain the intersection  of two polytopes nonempty: 
\begin{equation} \label{eq:PiPjnonempty}
   \mathcal{P}^i(x^i(t)) \cap \mathcal{P}^j(x^j(t)) \neq \emptyset, ~\forall t\geq0,
\end{equation}   
which is  opposite to the goals of polytopic collision avoidance \cite{dai2023safe, wei2024collision, thirugnanam2025control, wu2025optimization,  thirugnanam2022duality, thirugnanam2023nonsmooth}.
From the perspective of PVM, $\Phi(x)=\mathcal{P}^i(x^i) \cap \mathcal{P}^j(x^j) $ and $\Psi(x)=\mathcal{U}^{i}\times \mathcal{U}^{j}$ in this example.  
\end{example} 

\subsection{Preliminary Analysis}
 
The 
radius $r\geq0$ of the largest sphere contained inside the  polytope $\Phi(x) $ defined in \eqref{eq:N-affine-output}, called
Chebyshev ball \cite[Chapter 8]{boyd2004convex}, can be computed by the following linear
program (LP)  at every $x$: 
\begin{subequations}\label{eq:Chebyshev-ball}
\begin{align}
  r^{*}(x):= &\max_{c\in \mathbb{R}^{l},  r\geq 0} ~r = - \min_{c\in \mathbb{R}^{l},  r\geq 0} ~ -r,  \\
  \text{ s.t. }  & a_j^{\top}(x) c + \|a_j(x)\| r \leq b_j(x),~  \forall j\in \mathcal{J}.
  \end{align}
\end{subequations} 
Given any user-defined threshold value $\epsilon_{0}>0$, as illustrated in Fig. \ref{fig:Figure-problem-solution} (a), one can know that  
 the goal in \eqref{eq:def-nonempty} holds if 
\begin{align}
r^{*}(x(t)) \geq \epsilon_{0} ,~\forall t\geq0. \label{eq:rxtgeqe}   
\end{align}

\begin{remark}
The reference \cite{parwana2023feasible} shows the first study on choosing the feasible space volume as a BF candidate to solve 
feasible space monitoring of CBFs, i.e. a case study of the PVM problem here. However, it 
 focuses on engineering implementations, applying the differentiable optimization toolbox, $\mathrm{cvxpylayers}$ \cite{agrawal2019differentiable}, to approximate the gradient of the nonsmooth implicit function  $ r^{*}(x)$ by some heuristic gradient values. 
It is noted in  \cite[Remark 1]{parwana2023feasible} that a more formal nonsmooth control theoretic analysis and consideration of generalized gradient $\partial r^{*}(x)$, is left for future work. As far as the authors know, this problem has been still  open up to now due to the inherent difficulties in estimating  $\partial r^{*}(x)$. In the following, more in-depth analysis will be performed to design a safety filter \eqref{eq:sf-PVM} without requring  $\partial r^{*}(x)$.
  $\hfill\square$ 
\end{remark}

\section{Main results}\label{sec:synthesizing}

To solve the aforementioned open problem,  the nonsmooth parametric optimization theory \cite{still2018lectures}, as shown in Appendix \ref{sub:pcp},  is introduced to develop a general approach for synthesizing a NCBF-based safety filter to solve PVM problems for the first time. 
Without calculating the generalized gradient $\partial r^{*}(x)$, the NCBF condition is  established based on its directional derivative for convenience.

\subsection{Establishing a Parameteric LP-defined NBF Candidate}
 
Inspired by \eqref{eq:Chebyshev-ball} and \eqref{eq:rxtgeqe}, one can define an implicit function $h_{r}(x)$ via the following  parametric linear program: 
\begin{equation}\label{eq:hxmin}
\begin{aligned}
 h_{r}(x) := &   r^{*}(x) - \epsilon_{0} = - \epsilon_{0} - \!\min_{z\in F_P(x)}  q^{\top} z ,
 \forall x \in \mathcal{A}, 
\end{aligned}
\end{equation}
where $q:=\operatorname{col}{(0_{l \times 1},-1)}$ and $ 
z:=\operatorname{col}{(c,r)} \in \mathbb{R}^{ l+1}$. The feasible set $F_P(x)$ is defined as  
\begin{equation}\label{eq:F_Px}
\begin{aligned}
F_P(x) :=\left\{z\mid  \tilde{\mathbf{A}}(x) z\leq \tilde{\mathbf{b}}(x) \right\}, 
\end{aligned} 
\end{equation} 
where $\tilde{\mathbf{A}}(x) \in \mathbb{R}^{(N+1) \times (l+1)}$ and $\tilde{\mathbf{b}}(x) \in \mathbb{R}^{N+1} $ are  
\begin{align}
\tilde{\mathbf{A}}(x)   :=\left[ \! \! \!   \begin{array}{cc}   
   0_{1 \times l},& -1   \\
  a_1^{\top}(x), & \|a_1(x)\|   \\
  a_2^{\top}(x), & \|a_2(x)\|   \\
 \cdots  & \cdots \\
  a_N^{\top}(x), &  \|a_N(x)\| 
\end{array} \! \! \!  \right] , ~
\tilde{\mathbf{b}}(x)   :=\left[ \! \!  \!  \begin{array}{c}   
   0 \\
  b_1(x) \\
  b_2(x) \\
  \cdots  \\
  b_N(x)
\end{array} \! \!  \!  \right]. \label{eq: Atildebtilde}
\end{align}
For brevity, let $\tilde{a}_j^{\top}(x)$ and $\tilde{b}_j(x)$ denote the $j$-th row of $\tilde{\mathbf{A}}(x)$ and $\tilde{\mathbf{b}}(x)$ respectively, i.e., 
$$
\tilde{a}_j(x) := \tilde{\mathbf{A}}^{\top}(x) e_{j}, ~ 
\tilde{b}_j(x) := \tilde{\mathbf{b}}^{\top}(x) e_{j}, \forall j\in [N+1],
$$  
where  $e_j $ is the $j$-th  column of the $(N+1)$-dimensional identity matrix and $[N+1]:=\{1,2, \cdots, N+1\}$.

Next, the basic properties of the parametric linear program in \eqref{eq:hxmin} are shown first, where Mangasarian Fromovitz Constraint Qualification (MFCQ), as in {Definition} \ref{def:LICQ-MFCQ},  is a key  condition in studying the continuity and directional differentiability of $r^{*}(x)$.
 
 \begin{proposition}\label{theo:pLPhx}
Consider the set $\mathcal{A}$ defined in \eqref{eq:def-nonempty} and the function $h_{r}(x)$ in  \eqref{eq:hxmin}.  
The following conclusions hold: 
\begin{itemize} 
 \item[i)] The interior of $F_P(x)$ is nonempty for all $x\in \mathcal{A}$. 
 \item[ii)] Furthermore, the set $F_P(x)$ and the function $h_{r}(x)$ are uniformly bounded for all $x\in \mathcal{A}$.  The solution map
 \begin{equation} \label{eq:Sxargmin} 
  \begin{aligned}
 S(x) :=& \argmin_{z\in F_P(x)}~  q^{\top} z ,
  \end{aligned}
 \end{equation}
 is on the boundary of $F_P(x)$, i.e., $ S(x)\subset \partial F_P(x)$. 
 \item[iii)] For all $x\in \mathcal{A}$,  the MFCQ holds at any $z \in F_P(x)$.
\end{itemize} 
\end{proposition}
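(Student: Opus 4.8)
The plan is to prove the three items in order, using as the single structural input the fact that $x\in\mathcal{A}$ means exactly that $\Phi(x)$ has a nonempty interior, together with Assumption~\ref{ass:1}. For item i), I would pick $c_0\in\operatorname{Int}(\Phi(x))$, so that $a_j^{\top}(x)c_0<b_j(x)$ for every $j\in\mathcal{J}$, and then — using that every $a_j(x)\neq 0$ by Assumption~\ref{ass:1} — set $r_0:=\tfrac12\min_{j\in\mathcal{J}}\bigl(b_j(x)-a_j^{\top}(x)c_0\bigr)/\|a_j(x)\|>0$. A direct check shows $z_0:=\operatorname{col}(c_0,r_0)$ satisfies every row of $\tilde{\mathbf{A}}(x)z\le\tilde{\mathbf{b}}(x)$ strictly: the first row reads $-r_0<0$, and row $j{+}1$ reads $a_j^{\top}(x)c_0+\|a_j(x)\|r_0<b_j(x)$ by the choice of $r_0$. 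Hence $z_0\in\operatorname{Int}(F_P(x))$, and in particular $F_P(x)\neq\emptyset$.

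For item ii), the key observation is geometric: if $z=\operatorname{col}(c,r)\in F_P(x)$ then the first row forces $r\ge 0$, and for any $v$ with $\|v\|\le 1$ one has $a_j^{\top}(x)(c+rv)\le a_j^{\top}(x)c+r\|a_j(x)\|\le b_j(x)$, so the closed ball $B(c,r)$ lies inside $\Phi(x)$. By Assumption~\ref{ass:1}, $\Phi(x)\subseteq\mathcal{D}_y$ with $\mathcal{D}_y$ compact, hence $c\in\mathcal{D}_y$ and $0\le r\le\tfrac12\operatorname{diam}(\mathcal{D}_y)$; thus $F_P(x)$ lies in a fixed compact set independent of $x$, and being an intersection of finitely many closed half-spaces it is itself compact. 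Then $\min_{z\in F_P(x)}q^{\top}z$ is attained and lies in $[-\tfrac12\operatorname{diam}(\mathcal{D}_y),0]$, which gives the uniform bound on $h_r(x)$ (and shows $S(x)\neq\emptyset$). Finally $S(x)\subset\partial F_P(x)$ follows because $q\neq 0$: if some $z^{*}=\operatorname{col}(c^{*},r^{*})\in S(x)$ were in $\operatorname{Int}(F_P(x))$, then $\operatorname{col}(c^{*},r^{*}+\delta)\in F_P(x)$ for small $\delta>0$ with strictly smaller objective value $-(r^{*}+\delta)$, contradicting optimality.

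For item iii), I would reuse the strict-interior point $z_0\in\operatorname{Int}(F_P(x))$ from item i). Given any $z\in F_P(x)$ with active set $J_0(z):=\{j\in[N+1]:\tilde{a}_j^{\top}(x)z=\tilde{b}_j(x)\}$, take the direction $d:=z_0-z$; then for each $j\in J_0(z)$ we get $\tilde{a}_j^{\top}(x)d=\tilde{a}_j^{\top}(x)z_0-\tilde{b}_j(x)<0$. Since the constraint system in \eqref{eq:F_Px} has no equality constraints, this is precisely the MFCQ condition (Definition~\ref{def:LICQ-MFCQ}) at $z$; i.e.\ for affine systems a nonempty interior (Slater) propagates to MFCQ at every feasible point.

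None of the steps is deep; the one needing the most care is item ii), where the point is to turn the inclusion $B(c,r)\subseteq\Phi(x)\subseteq\mathcal{D}_y$ into a bound that is \emph{uniform} in $x$, to verify that the first row $-r\le 0$ of $\tilde{\mathbf{A}}(x)$ is exactly what legitimizes the ball argument by guaranteeing $r\ge 0$, and to confirm compactness so that the minimum defining $h_r(x)$ is attained and the objective-direction argument for $S(x)\subset\partial F_P(x)$ goes through.
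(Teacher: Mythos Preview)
Your proof is correct and follows essentially the same strategy as the paper: build a strict-interior point of $F_P(x)$ from one of $\Phi(x)$, use $\Phi(x)\subseteq\mathcal{D}_y$ for the uniform bound, invoke the LP structure for $S(x)\subset\partial F_P(x)$, and derive MFCQ from the Slater-type interior point. The only differences are cosmetic --- your explicit formula for $r_0$ and your direct MFCQ-direction $d=z_0-z$ (where the paper instead cites the Slater--MFCQ equivalence for affine constraints) --- but these are the same ideas executed slightly differently.
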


\begin{proof} 
i) First, it is shown that  $F_P(x)\neq \emptyset,\forall x \in \mathcal{A}$. Recalling 
$$ 
\operatorname{Int} (\Phi(x)) = \left\{c \mid  a_j^{\top}(x) c < b_j(x),~  \forall j\in \mathcal{J}  \right\}, 
$$
for $x \in \mathcal{A}$, since $\operatorname{Int} (\Phi(x)) $ is nonempty and uniformly bounded (according to Assumption \ref{ass:1}), one can find a sufficiently small  
$\varepsilon_0(x)>0$ and  a point $c_0(x)\in \mathcal{D}_{y}$, such that 
$$a_j^{\top}(x) c_0(x) + \varepsilon_0(x)  < b_j(x),~  \forall  j\in \mathcal{J}.$$
For every $j $, since $\|a_j(x)\|\neq 0$ as in {Assumption} \ref{ass:1}, 
there exists $c_2(x)\neq 0$, s.t.  
$$\varepsilon_0= a_j^{\top} c_2 =\| a_j\| \underbrace{\|c_2\| \cos \langle a_j, c_2  \rangle}_{=: r_j(x)}  >0.$$
Let $r_0:=\min_{j\in \mathcal{J}}~ r_j$, one has $r_0>0$, $\| a_j\| r_0 \leq \varepsilon_0$, and  
$$a_j^{\top}(x) c_0(x) + \|a_j(x)\| r_0(x) < b_j(x),~  \forall  j\in \mathcal{J},$$
which means that $  \operatorname{Int}(F_P(x)) \neq \emptyset $.

ii) The following shows the uniform boundedness of $F_P(x)$.  Recalling the expression of $F_P(x)$ in \eqref{eq:F_Px}:
$$\begin{aligned}
F_P(x) :=\left\{z\mid r\geq0,  {a}_{j}^{\top} c + \| {a}_{j} \| r \leq  {b}_{j} , \forall j\in \mathcal{J} \right\}, 
\end{aligned}$$ 
from which one knows that 
$$
\begin{aligned}
0 \leq r \leq & \max_{c\in \Phi(x)}  \min_{j\in \mathcal{J}} \frac{  {b}_{j}(x) -  {a}_{j}^{\top} (x) c }{ \|{a}_{j}(x) \|  } \\
=&  \max_{c\in \Phi(x)} d(c, \partial \Phi(x)) \leq \max_{c\in \mathcal{D}_{y}} d(c, \partial \mathcal{D}_{y}) =: r_{d},
\end{aligned}
$$
where $d(c, \mathcal{C} ):= \min_{c'\in  \mathcal{C}} \| c- c' \|$ denotes the distance function from a point $x$ and a set $\mathcal{C}$, the compact set  $\mathcal{D}_{y}$ is derived from Assumption \ref{ass:1} and $r_{d}$ is a positive constant.
Furthermore, one has 
$$
F_P(x) \subset \Phi(x) \times [0, r_{d}]  \subset  \mathcal{D}_{y} \times [0, r_{d}], 
$$
which means that the set $F_P(x)$ is uniformly bounded, and functions $r^*(x)$ and  $h_r(x)$ are also uniformly  bounded. 
Due to the fact that \eqref{eq:hxmin} is a LP problem, the optimization value is obtained at a point $z$ only if $z$ is on the boundary $\partial F_P(x)$. 

iii) The MFCQ condition holds for the optimization problem in \eqref{eq:hxmin} since it is a LP piecewise at $x$.  
Recalling the fact in \cite{solodov2010constraint} that for $\mathcal{C}^{1}$ convex programs with affine inequalities, the MFCQ condition is equivalent to the Slater condition.  
Since  we have proved in {Theorem} \ref{theo:pLPhx} i)  that  $  \operatorname{Int}(F_P(x)) \neq \emptyset, \forall x\in \mathcal{A}$, there exists a point $z\in \operatorname{Int}(F_P(x))$ such that the inequalities in \eqref{eq:F_Px} holds strictly, hence the Slater condition (as well as the MFCQ condition) naturally holds at any $z \in F_P(x)$.  
\end{proof}
 
Based on {Proposition \ref{theo:pLPhx}}, one can prove that $h_{r}(x)$ is a NBF candidate by verifying the conditions in Definition \ref{def:NBFcandidate}. 
 
\begin{theorem} \label{lem:the-function-sd-candidate-NCBF}
The  Parametric LP defined implicit function $h_{r}(x)$ in \eqref{eq:hxmin}   is locally Lipschitz continuous on $\mathcal{A}$, and it is a  NBF candidate on the closed set $\mathcal{S}:=\{ x\mid h_r(x) \geq 0  \}\subset \mathcal{A}$. 
\end{theorem}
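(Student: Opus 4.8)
The plan is to prove the two assertions---local Lipschitz continuity of $h_r$ on $\mathcal{A}$, and the NBF-candidate property on $\mathcal{S}$---in turn; note first that $\mathcal{S}\subset\mathcal{A}$ is immediate, since $h_r(x)\geq0$ forces $r^{*}(x)\geq\epsilon_0>0$ and hence $\operatorname{Int}(\Phi(x))\neq\emptyset$. For the Lipschitz claim I would invoke the parametric-program value-function theory of the Appendix. By Proposition~\ref{theo:pLPhx}, for every $x\in\mathcal{A}$ the feasible set $F_P(x)$ and the solution map $S(x)$ are (uniformly) bounded and MFCQ holds at every feasible point; moreover the data $\tilde{\mathbf{A}}(x),\tilde{\mathbf{b}}(x)$ depend on $x$ in a $C^{1}$, hence locally Lipschitz, way, since $a_j,b_j$ are $C^{1}$ and $a_j(x)\neq0$ makes $\|a_j(x)\|$ smooth. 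The standard Lipschitz-stability result for the optimal value of a parametric program under MFCQ with a bounded solution set then shows that $x\mapsto-\min_{z\in F_P(x)}q^{\top}z$, i.e. $h_r$, is locally Lipschitz on $\mathcal{A}$. (An equivalent elementary route: on $\mathcal{A}$ one has $r^{*}(x)=\max_{c\in\mathcal{D}_y}\min_{j\in\mathcal{J}}\frac{b_j(x)-a_j^{\top}(x)c}{\|a_j(x)\|}$, a maximum over the \emph{fixed} compact set $\mathcal{D}_y$ of functions that are, on a neighbourhood of any $x_0\in\mathcal{A}$, uniformly Lipschitz in $x$; a maximum of uniformly Lipschitz functions is Lipschitz.)

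\emph{Closedness of $\mathcal{S}$.} Since $h_r$ is continuous on $\mathcal{A}$, the set $\mathcal{S}=\{x\in\mathcal{A}:h_r(x)\geq0\}$ is relatively closed in $\mathcal{A}$, so it suffices to show that every $\mathbb{R}^{n}$-limit point of $\mathcal{S}$ lies in $\mathcal{A}$. If $x_k\in\mathcal{S}$ and $x_k\to x$, each $x_k$ has a Chebyshev centre $c_k\in\Phi(x_k)\subseteq\mathcal{D}_y$ with $a_j^{\top}(x_k)c_k+\|a_j(x_k)\|\epsilon_0\leq b_j(x_k)$ for all $j$ (because $r^{*}(x_k)\geq\epsilon_0$). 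By compactness of $\mathcal{D}_y$ a subsequence $c_k\to c$, and passing to the limit yields $a_j^{\top}(x)c+\|a_j(x)\|\epsilon_0\leq b_j(x)$, i.e. the closed ball $B(c,\epsilon_0)\subseteq\Phi(x)$; hence $\operatorname{Int}(\Phi(x))\neq\emptyset$, so $x\in\mathcal{A}$ (indeed $x\in\mathcal{S}$). Thus $\mathcal{S}$ is closed in $\mathbb{R}^{n}$.

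\emph{Sign conditions, easy part.} On $\mathcal{A}\setminus\mathcal{S}$---the part of $\mathbb{R}^{n}\setminus\mathcal{S}$ on which $h_r$ is defined---we have $h_r(x)=r^{*}(x)-\epsilon_0<0$ directly from the definition of $\mathcal{S}$. The inclusion $\{x:h_r(x)>0\}\subseteq\operatorname{Int}(\mathcal{S})$ holds because $r^{*}(x)>\epsilon_0$ together with $\|a_j(x)\|>0$ gives the finitely many \emph{strict} inequalities $a_j^{\top}(x)c^{*}(x)+\|a_j(x)\|\epsilon_0<b_j(x)$ for the Chebyshev centre $c^{*}(x)$, and, being strict, these persist with the \emph{same} $c^{*}(x)$ for all $x'$ in a neighbourhood of $x$ (continuity of $a_j,b_j$), so that $B(c^{*}(x),\epsilon_0)\subseteq\Phi(x')$ and $x'\in\mathcal{S}$.

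\emph{The main obstacle.} What remains---and what I expect to be the delicate point---is the reverse inclusion $\operatorname{Int}(\mathcal{S})\subseteq\{h_r>0\}$ together with the absence of isolated points of $\mathcal{S}$; with closedness these give $\operatorname{Int}(\mathcal{S})=\{h_r>0\}$ and $\partial\mathcal{S}=\{h_r=0\}\cap\mathcal{S}$, after which Definition~\ref{def:NBFcandidate} is verified on $\mathcal{S}$ and the theorem follows together with the Lipschitz continuity above. Both remaining facts amount to a mild non-degeneracy of the critical level: that $r^{*}$ actually crosses the value $\epsilon_0$ rather than merely touching it (no interior local minimum of $r^{*}$ at level $\epsilon_0$, and no strict local maximum there). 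I would either impose this as a regularity requirement on $(\Phi(\cdot),\epsilon_0)$---it holds in the situations and examples of Section~\ref{sec:problem}---or try to extract it from the active-constraint structure of the Chebyshev solution and the continuity of the solution map $S(\cdot)$ established in Proposition~\ref{theo:pLPhx}.
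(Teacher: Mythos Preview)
Your Lipschitz argument is exactly the paper's: Proposition~\ref{theo:pLPhx} (uniform boundedness of $F_P(x)$ and MFCQ at every feasible point) supplies the hypotheses of Lemma~\ref{theorem:PCP}, whence $h_r$ is locally Lipschitz on $\mathcal{A}$. The alternative max--min formula over $\mathcal{D}_y$ you sketch is a nice elementary bypass, but the paper does not use it.

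For the NBF-candidate part, the paper's proof is a single sentence: ``Since $\mathcal{S}$ is defined by $h_r(x)$, $h_r(x)$ is a NBF candidate on the set $\mathcal{S}$ according to Definition~\ref{def:NBFcandidate}.'' In other words, the paper treats the three sign conditions and the absence of isolated points as automatic from the superlevel-set construction. Your careful verification---that $\mathcal{S}$ is closed in $\mathbb{R}^n$ (not just relatively in $\mathcal{A}$), that $\{h_r>0\}\subseteq\operatorname{Int}(\mathcal{S})$, and that $h_r<0$ off $\mathcal{S}$---is considerably more rigorous than what the paper actually provides. The ``main obstacle'' you isolate, namely $\operatorname{Int}(\mathcal{S})\subseteq\{h_r>0\}$ and the no-isolated-point requirement, is a genuine issue that the paper simply does not address; it is implicitly swept into the definitional setup (a common convention in the CBF literature, but not one the stated Definition~\ref{def:NBFcandidate} licenses without a regular-level assumption). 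So you have not missed anything the paper supplies---on the contrary, you have identified a gap the paper leaves open and correctly proposed the standard fix (assume $\epsilon_0$ is a regular value of $r^*$, or equivalently impose the mild transversality you describe).
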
 

\begin{proof}
Recalling Proposition \ref{theo:pLPhx} ii) and iii) that  ${F}_P(x)$ is uniformly bounded for $x\in \mathcal{A}$ and  the MFCQ condition holds, the compact set $C_0$ required in Lemma \ref{theorem:PCP} exists, hence the conditions in Lemma \ref{theorem:PCP} are satisfied, thus $h_{r}(x)$ is  locally Lipschitz continuous.  
Since $\mathcal{S}$ is defined by $h_{r}(x)$, $h_{r}(x)$ is a NBF candidate on the set $\mathcal{S}$ according to Definition \ref{def:NBFcandidate}. 
\end{proof}
 
 
\subsection{Building  the NCBF Condition}\label{sec:cbNBFc}

This section aims to establish the NCBF condition based on existing results about the directional differentiability of parametric convex programs in \emph{Lemma \ref{theorem:PCP2}}. 

First, according to the feasible set $F_P(\cdot)$ in \eqref{eq:F_Px}, the active index set\footnote{Inspired by \cite{glotfelter2020nonsmooth}, $ \mathcal{J}_0( {z},x)$ can be replaced by the socalled \emph{almost-active} index set 
$  \mathcal{J}_{\varepsilon}( {z},x):=\{j \in \mathcal{J} \mid  \| \tilde{a}_j^{\top}(x) {z} - \tilde{b}_j(x)\| \leq \varepsilon  \}$ for real implementation, where $\varepsilon$ is a sufficiently small scalar, such as  $ \varepsilon=10^{-4}$.} is defined for every $ {x}$ and feasible $ {z} \in F_{P}( {x})$ as:
\begin{equation}\label{eq:J0zleft}
  \begin{aligned}
 \mathcal{J}_0( {z},x) 
:= & \left\{j \in [N+1]  \mid  \tilde{a}_j^{\top}(x) {z} = \tilde{b}_j(x) \right\}.   
  \end{aligned}
\end{equation}
For every $\bar{z}, \bar{x}$ satisfying $\bar{z}\in S(\bar{x})$, recalling $ S(\bar{x}) \subset  \partial F_P(\bar{x})$ in {Proposition} \ref{theo:pLPhx} iii), it holds that 
$$ \mathcal{J}_0(\bar{z}, \bar{x}) \neq \emptyset , \text{ i.e., } | \mathcal{J}_0(\bar{z}, \bar{x})|\geq 1. $$
Based on \eqref{eq: Atildebtilde} and \eqref{eq:J0zleft}, one can further define the active matrix of $\tilde{\mathbf{A}}(x)$ and  the active  vector of $\tilde{\mathbf{b}}(x)$  for every $x$: 
\begin{equation}\label{eq:J0zleft2}
  \begin{aligned}
  \tilde{\mathbf{A}}_{ \mathcal{J}_0(\bar{z}, \bar{x})} (x) 
  :=  \! \! \left[\!\! \! \begin{array}{c}
    \tilde{a}_{j_1}^{\top}(x) \\
    \tilde{a}_{j_2}^{\top}(x) \\
    ...... \\
    \tilde{a}_{j_m}^{\top}(x) 
\end{array}\! \! \! \right] ,   
\tilde{\mathbf{b}}_{ \mathcal{J}_0(\bar{z}, \bar{x})} (x) 
:=  \! \!  \left[\!\! \! \begin{array}{c}
  \tilde{b}_{j_1}(x) \\
  \tilde{b}_{j_2}(x) \\
  ...... \\
  \tilde{b}_{j_m}(x)  
\end{array}\!\! \!  \right]  , 
\end{aligned}
 \end{equation} 
where $j_1,j_2,...,j_m$ represent all indices in $\mathcal{J}_0(\bar{z}, \bar{x})$. In the above, variables $\bar{z}, \bar{x}$, appearing in the active set $\mathcal{J}_0(\cdot,\cdot)$, are introduced to make a difference from  the variable  $x$ in $\tilde{a}_{j}(x)$ and $\tilde{b}_{j}(x)$. Then the directional derivative of $ h_{r}(x) $ along the system can be obtained through \emph{Lemma \ref{theorem:PCP2}}. 

\begin{proposition}\label{prop:dir_der_h}
The directional derivative of $ h_{r}(x)$ in \eqref{eq:hxmin} at $x$ along $\dot{x}$ is 
\begin{align}
  h'_{r}(x ; \dot{x})  
  = \!\! \max _{z \in S(x)} \min_{\mu  \in M(z,x)} \!\!  
  -  \partial_{x} L(\mu, z, x) \dot{x}    , \label{eq:hxotxmin}
\end{align}
 where  the solution map $ S(x)$ is defined as  in \eqref{eq:Sxargmin}, 
 \begin{align}
  \partial_{x} L(\mu, z, x)    
  &= \!\sum_{j\in \mathcal{J}_0({z}, {x})}   \mu_{j} 
  \left( z^{\top}   \frac{\partial \tilde{ {a}}_{j}}{\partial x } (x)   -  
  \frac{\partial  \tilde{b}_{j}}{\partial x }(x)  \right),   \label{eq:partialxLmuzx}
  \end{align}
 and the set of multipliers  
\begin{equation}\label{eq:Mzxnuqt}
\begin{aligned}
M(z,x) 
= &\left\{\mu \mid  \tilde{\mathbf{A}}^{\top}_{ \mathcal{J}_0( {z}, {x})}( x) \mu =  - q , ~ \mu \geq 0  \right\}  ,   
\end{aligned} 
\end{equation}
is bounded (compact). 
\end{proposition}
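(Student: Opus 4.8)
The plan is to recognize $h_r$ as a constant shift of the \emph{negative} optimal value function of the parametric linear program
\[
\phi(x) := \min_{z\in F_P(x)}\, q^{\top} z ,
\]
so that $h_r(x) = -\epsilon_0 - \phi(x)$ and $h_r'(x;\dot x) = -\phi'(x;\dot x)$, and then to apply the directional differentiability result for parametric convex programs, Lemma~\ref{theorem:PCP2}. First I would collect the hypotheses that Lemma~\ref{theorem:PCP2} requires and check each against what is already in hand: the objective $q^{\top}z$ and the constraint functions $\tilde a_j^{\top}(x)z-\tilde b_j(x)$ are $C^1$ jointly in $(z,x)$ (since $a_j,b_j$ are $C^1$ and $z\mapsto q^{\top}z$ is linear) and convex in $z$; by Proposition~\ref{theo:pLPhx} ii) the feasible set $F_P(x)$ is uniformly bounded and the solution map $S(x)$ is nonempty and lies on $\partial F_P(x)$; by Proposition~\ref{theo:pLPhx} i) and iii) the interior of $F_P(x)$ is nonempty and MFCQ holds at every feasible $z$; and by Theorem~\ref{lem:the-function-sd-candidate-NCBF} the function $h_r$ (hence $\phi$) is locally Lipschitz on $\mathcal{A}$. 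These are exactly the ingredients (convexity, MFCQ, inf-compactness/uniform boundedness of the solution set, and local Lipschitzness) under which the parametric-programming machinery of Lemma~\ref{theorem:PCP2} applies.

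Second, I would invoke Lemma~\ref{theorem:PCP2} to write the directional derivative of the value function along an arbitrary direction $d$ as
\[
\phi'(x;d) \;=\; \min_{z\in S(x)}\ \max_{\mu\in M(z,x)}\ \partial_{x} L(\mu,z,x)\, d ,
\]
where $L(\mu,z,x) = q^{\top}z + \sum_{j\in[N+1]}\mu_j\bigl(\tilde a_j^{\top}(x)z - \tilde b_j(x)\bigr)$ is the Lagrangian of the program in \eqref{eq:hxmin}. Two routine simplifications then give the stated form. (a) Since $q^{\top}z$ does not depend on $x$, differentiating $L$ in $x$ leaves only the constraint terms, and complementary slackness forces $\mu_j=0$ for $j\notin\mathcal{J}_0(z,x)$; this yields the explicit expression \eqref{eq:partialxLmuzx} for $\partial_{x}L$, and it reduces the KKT stationarity condition $q + \tilde{\mathbf{A}}^{\top}(x)\mu = 0$, $\mu\ge 0$, to $\tilde{\mathbf{A}}^{\top}_{\mathcal{J}_0(z,x)}(x)\mu = -q$, $\mu\ge 0$, i.e.\ the multiplier set $M(z,x)$ of \eqref{eq:Mzxnuqt}. (b) Using $h_r(x) = -\epsilon_0-\phi(x)$ and pushing the minus sign inside swaps the inner/outer $\min$ and $\max$:
\[
h_r'(x;d) \;=\; -\phi'(x;d) \;=\; \max_{z\in S(x)}\ \min_{\mu\in M(z,x)}\ -\partial_{x}L(\mu,z,x)\, d .
\]
Taking $d=\dot x = f(x)+g(x)u$ gives \eqref{eq:hxotxmin}. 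I would also remark that $S(x)$ is compact (Proposition~\ref{theo:pLPhx} ii)) and $M(z,x)$ is compact (shown next), so the extrema are attained and $\max$/$\min$ are legitimate. For the boundedness (compactness) of $M(z,x)$, I would invoke the classical fact that at a minimizer MFCQ holds iff the KKT multiplier set is nonempty and bounded (see the material underlying the Appendix, \cite{still2018lectures}); MFCQ at every $z\in F_P(x)$ was proved in Proposition~\ref{theo:pLPhx} iii), so $M(z,x)$ is a nonempty compact polyhedron. Alternatively, compactness can be read off directly: $M(z,x)$ is the intersection of the nonnegative orthant with an affine set and hence contains no line, and nonemptiness follows since $S(x)\neq\emptyset$ with MFCQ.

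The step I expect to be the main obstacle is matching the hypotheses of the abstract Lemma~\ref{theorem:PCP2} precisely — in particular the inf-compactness/uniform-boundedness requirement on $S(x)$ together with the convex-program regularity under which the $\min$--$\max$ formula for $\phi'(x;d)$ holds with equality rather than only a one-sided inequality. Fortunately these are exactly what Proposition~\ref{theo:pLPhx} ii)--iii) supply (the former controls the outer $\min_{z\in S(x)}$, the latter makes the inner $\max_{\mu\in M(z,x)}$ a genuine Danskin-type maximum). Everything after that — the explicit form of $\partial_{x}L$, the reduction of the multiplier set via complementary slackness, and the sign flip — is bookkeeping. It is worth emphasizing that, because $\|a_j(x)\|$ and the $b_j(x)$ enter nonlinearly, $\phi$ is \emph{not} piecewise linear in $x$, so the general parametric convex program theory, rather than elementary LP sensitivity analysis, is genuinely needed here.
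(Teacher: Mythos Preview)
Your approach is essentially identical to the paper's: both write $h_r(x)=-\epsilon_0-v_r(x)$ for the LP value function $v_r$, verify the hypotheses of Lemma~\ref{theorem:PCP2} via Proposition~\ref{theo:pLPhx}, apply that lemma, and flip the sign to swap $\min$/$\max$. The only noteworthy difference is how boundedness of $M(z,x)$ is argued. You invoke the classical Gauvin equivalence (MFCQ $\Leftrightarrow$ bounded KKT multiplier set), which is perfectly valid; the paper instead reads it off the explicit structure of the stationarity system $\tilde{\mathbf{A}}^{\top}_{\mathcal{J}_0}\mu=-q$: the row corresponding to the $r$-coordinate forces $\sum_{j\in\mathcal{J}_0}\|a_j(x)\|\,\mu_j=1$ with $\mu\ge0$, and since $\|a_j(x)\|>0$ by Assumption~\ref{ass:1} this is a simplex-type set, hence bounded. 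Both routes work; the paper's is self-contained while yours appeals to an external result. One caution: your \emph{alternative} argument (``intersection of the nonnegative orthant with an affine set, hence contains no line'') does not by itself yield boundedness---such a set can still be an unbounded polyhedral cone (e.g.\ the orthant itself)---so drop that remark and keep either the MFCQ route or the paper's explicit computation.
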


\begin{proof}
 This proposition is the corollary of Lemma \ref{theorem:PCP2} with the boundness of set $M(z,x)$ to be proven. 
   To prove this,  first define 
$$
v_r(x):=  \min_{z  \in F_P(x)}  ~ q^{\top} z ,
$$ 
then $h_{r}(x)$ can be expressed as $h_{r}(x)= - v_r(x) - \epsilon_{0}$.

According to \eqref{eq:Lxtmu}, the local Lagrangian function  near $\bar{x},\bar{z} $ is defined as:
$$\begin{aligned}
L(\mu, z, x)  =   q^{\top} z +  \mu^{\top}   (   \tilde{\mathbf{A}}_{ \mathcal{J}_0(\bar{z}, \bar{x})}( x) z 
    - \tilde{\mathbf{b}}_{ \mathcal{J}_0(\bar{z}, \bar{x})}(x)   ), 
\end{aligned}$$
where $\mu \in \mathbb{R}^{  | \mathcal{J}_0({z},x)|}$, and  $\tilde{\mathbf{A}}_{ \mathcal{J}_0}, \tilde{\mathbf{b}}_{\mathcal{J}_0}$ are defined in \eqref{eq:J0zleft2}. 
Then,  the partial derivative of $L(\mu, z, x) $ w.r.t.  vectors  $z$ and $x$ are  respectively given by:    
$$\begin{aligned}
    \partial_{z} L(\mu, z, x) &= q^{\top} + \mu^{\top}  \tilde{\mathbf{A}}_{ \mathcal{J}_0( \bar{z},  \bar{x})}( x), \\
    \partial_{x} L(\mu, z, x) &= \mu^{\top} \left(  \frac{\partial  }{\partial x} 
    \left(  \tilde{\mathbf{A}}_{ \mathcal{J}_0(\bar{z}, \bar{x})}(x)z   \right) - \frac{\partial }{\partial x} \tilde{\mathbf{b}}_{ \mathcal{J}_0(\bar{z}, \bar{x})}(x)  \right) \nonumber  \\
    &= \sum_{j\in \mathcal{J}_0(\bar{z}, \bar{x})}   \mu_{j} 
    \left( z^{\top}   \frac{\partial \tilde{ {a}}_{j}}{\partial x } (x) - \frac{\partial  \tilde{{b}}_{j}}{\partial x }(x)  \right),   
    \end{aligned}$$
and $M(z,x):=  \left\{\mu  \mid  \partial_{z} L(\mu, z, x) = 0^{\top}, ~ \mu \geq 0    \right\}$.  
In the above, the partial derivative of $L(\mu, z, x) $ w.r.t. variable $z,x$ at the point $\bar{z},\bar{x}$ are calculated. By further choosing $\bar{z}=z$ and $\bar{x}=x$, the expressions in  \eqref{eq:partialxLmuzx} and \eqref{eq:Mzxnuqt} can be obtained. 
  
Since MFCQ condition holds and $F_{P}(x)$ is compact as proved in {Proposition} \ref{theo:pLPhx}, then the conditions of \emph{Lemma \ref{theorem:PCP2}} hold, from which one has 
\begin{align*}
  h'_{r}(x ; \dot{x})  &=- v'_r(x ; \dot{x})  
   = - \min _{z \in S(x)} \max _{\mu  \in M(z,x)} \partial_{x} L(\mu, z, x) \dot{x} \nonumber \\
    &=\max _{z \in S(x)} \min_{\mu  \in M(z,x)} - \partial_{x} L(\mu, z, x) \dot{x} .  
  \end{align*}
Hence, the expression in  \eqref{eq:hxotxmin} is obtained. 
    
The set $M(z,x)$ in \eqref{eq:Mzxnuqt} can be further rewritten as: 
   $$ \begin{aligned}
   M(z,x) 
   & =  \left\{\mu
   \mid  \sum_{j\in  \mathcal{J}_0( {z}, {x}) } \tilde{a}_j(x)   \mu_{j}  = - q , ~ \mu \geq 0  \right\} \\
   &=  M_{1}(z,x)   \cap  M_{2}(z,x) , 
   \end{aligned} $$
  where 
  $$
  \begin{aligned}
  M_{1}(z,x) &:=  \left\{\mu  \mid
      \sum_{j\in  \mathcal{J}_0( {z}, {x})} a_j(x)   \mu_{j} = 0 \right\} ,\\
  M_{2}(z,x) &:=  \left\{\mu  \mid 
      \sum_{j\in  \mathcal{J}_0( {z}, {x})} \| a_j(x)  \|  \mu_{j}=1 , ~ \mu \geq 0 \right\}.    
  \end{aligned} 
  $$
  From the structure of $ M_{2}(z,x)$, since $\| a_j(x)  \| >0 $ from Assumption \ref{ass:1}, 
   $ M_{2}(z,x)$ is a bounded cone \cite{boyd2004convex}, hence its subset $ M(z,x)$ is bounded naturally.  
\end{proof}

Based on {Proposition} \ref{prop:dir_der_h} and Definition \ref{def:ncbf}, the NCBF condition  to  guaranteeing $h_r(x(t))\geq 0, ~\forall t $ along the system \eqref{eq:N-affine-systems2} 
can be briefly expressed  as follows: 
\begin{align}
\! \max_{ \substack{ z \in S(x)  \\ u \in \Psi(x)   }  }  \min_{\mu \in M(z, x)}   \!  \! 
- \partial_{x} L(\mu, z, x)   \left(f +g u\right) \geq - \alpha ( h_{r} ). 
\label{eq:hrxfgu2}
\end{align}

It is worth mentioning that condition \eqref{eq:hrxfgu2} is more complex than the regular NCBF condition \eqref{eq:uinUminpart}. Due to the  (double-layer) max-min optimization form, it is not obvious to calculate the generalized gradient of $h_r(x)$. To simplify \eqref{eq:hrxfgu2} into a more practical form that is convenient to synthesize control laws, a sufficient condition is established. 
  
\begin{theorem}\label{theo:NCBFforPIM}
The BF candidate $h_{r}(\cdot)$  is further a NCBF on $\mathcal{S}$, i.e., the NCBF condition in \eqref{eq:cbf condition}  holds for $h_{r}(\cdot)$, if  there exist $\alpha , \bar{\epsilon}>0$,  for all $x\in  \mathcal{D}_{\bar{\epsilon}}:=\{x\mid h_r(x) \geq - \bar{\epsilon}\}$,  $   \exists u\in \Psi(x)$, such that
\begin{equation} \label{eq:partxL}
\min_{\mu \in M(z^*(x), x)}- \partial_{x} L(\mu,z^*(x),x) \left(f +g u\right) \geq - \alpha h_{r},  
\end{equation}
where $z^*(x)=\operatorname{col}{(c^*(x),r^*(x))}$ is any point chosen from the set $S(x)$ and the expression of $\partial_{x} L(\cdot) $ is given by \eqref{eq:partialxLmuzx}.
\end{theorem}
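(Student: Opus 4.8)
The plan is to show that the sufficient condition \eqref{eq:partxL}, evaluated at an arbitrary selection $z^*(x) \in S(x)$, implies the full NCBF condition \eqref{eq:cbf condition} for $h_r$ on $\mathcal{S}$. The starting point is the directional-derivative formula \eqref{eq:hxotxmin} from Proposition \ref{prop:dir_der_h}, which gives
$$
h'_r(x;\dot x) = \max_{z \in S(x)} \min_{\mu \in M(z,x)} -\partial_x L(\mu,z,x)\dot x .
$$
First I would observe that, since the outer optimization in \eqref{eq:hxotxmin} is a maximum over $z \in S(x)$, for \emph{any} fixed selection $z^*(x) \in S(x)$ we have the lower bound
$$
h'_r\bigl(x; f(x)+g(x)u\bigr) \;\geq\; \min_{\mu \in M(z^*(x),x)} -\partial_x L\bigl(\mu,z^*(x),x\bigr)\bigl(f(x)+g(x)u\bigr).
$$
This is the key structural reduction: it trades the hard-to-handle double max-min in \eqref{eq:hrxfgu2} for a single min over the (compact, by Proposition \ref{prop:dir_der_h}) multiplier set $M(z^*(x),x)$, at the cost of only a sufficiency gap, which is exactly what a sufficient condition is allowed to incur.

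Next I would combine this with hypothesis \eqref{eq:partxL}: for every $x \in \mathcal{D}_{\bar\epsilon} = \{x \mid h_r(x) \geq -\bar\epsilon\}$ there exists $u \in \Psi(x)$ (recall $\Psi(x)$ is the admissible input set $\mathcal{D}_u$ here) with
$$
\min_{\mu \in M(z^*(x),x)} -\partial_x L\bigl(\mu,z^*(x),x\bigr)\bigl(f(x)+g(x)u\bigr) \;\geq\; -\alpha\, h_r(x).
$$
Chaining the two displays, for each such $x$ there is an admissible $u$ with $h'_r(x; f(x)+g(x)u) \geq -\alpha\,h_r(x)$, hence $\sup_{u \in \Psi(x)} h'_r(x; f(x)+g(x)u) \geq -\alpha\,h_r(x)$. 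To match Definition \ref{def:ncbf} verbatim I would then take the open set $\mathcal{D} := \operatorname{Int}(\mathcal{D}_{\bar\epsilon})$, which is open, contains $\mathcal{S} = \{h_r \geq 0\}$ (strictly, since $\bar\epsilon > 0$), and is contained in $\mathcal{A}$ where $h_r$ and all the objects in Proposition \ref{prop:dir_der_h} are well-defined; and I would take the extended class-$\mathcal{K}$ function to be the linear map $s \mapsto \alpha s$ (valid since $\alpha > 0$). Together with Theorem \ref{lem:the-function-sd-candidate-NCBF}, which already establishes that $h_r$ is a locally Lipschitz NBF candidate on $\mathcal{S}$, this verifies every clause of Definition \ref{def:ncbf}, so $h_r$ is an NCBF on $\mathcal{S}$.

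The main obstacle I anticipate is not the inequality-chaining, which is routine, but the measurability/regularity bookkeeping around the selection $z^*(x)$ and the resulting $u$: Definition \ref{def:ncbf} and Lemma \ref{lem:NCBF} ultimately want a measurable, locally bounded feedback $u(t) \in \mathcal{D}_u$, so one must be careful that choosing $z^*(x) \in S(x)$ pointwise does not secretly destroy the structure needed downstream. Here the saving observation is that \eqref{eq:cbf condition} as written is a \emph{pointwise existence} statement — for each $x$ the supremum over $u$ is bounded below — and the actual construction of a well-behaved controller is deferred to the QP-based safety filter \eqref{eq:sf-PVM} discussed later; so at the level of this theorem it suffices to exhibit, for each $x \in \mathcal{D}$, one admissible $u$ meeting the bound, which \eqref{eq:partxL} supplies directly. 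A secondary point worth a sentence is confirming that $-\partial_x L(\mu,z^*(x),x)(f+gu)$ is affine in $u$ (immediate from \eqref{eq:partialxLmuzx}, since $\partial_x L$ does not depend on $u$), so that the inner $\min_{\mu}$ of an affine-in-$u$ family is concave in $u$ — this is what makes the subsequent QP reformulation legitimate, and it is natural to record it here even though it is not strictly needed for the bare statement.
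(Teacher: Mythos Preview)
Your proposal is correct and follows exactly the paper's own argument: use the max--min formula \eqref{eq:hxotxmin}, lower-bound the outer $\max_{z\in S(x)}$ by any fixed selection $z^*(x)\in S(x)$, and then invoke hypothesis \eqref{eq:partxL} to obtain $h'_r(x;f+gu)\geq -\alpha h_r(x)$. The paper's proof is a bare two-line chain of inequalities and omits the bookkeeping you add (constructing the open set $\mathcal{D}=\operatorname{Int}(\mathcal{D}_{\bar\epsilon})$, noting the linear class-$\mathcal{K}$ function, and flagging measurability), so your version is strictly more thorough but not methodologically different.
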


\begin{proof}
Based on \eqref{eq:hxotxmin} and \eqref{eq:partxL}, one has 
$$\begin{aligned}
&h'_{r}(x ; f(x)+g(x)u )\\
&=\max_{z \in S(x)} \min_{\mu \in M(z, x)}  
- \partial_{x} L(\mu, z, x) \left(f +g u\right)  \\
&\geq
  ~ \min_{\mu \in M(z^*(x), x)}  - 
      \partial_{x} L(\mu, z^*(x), x) \left(f +g u\right) 
\geq - \alpha h_{r}, 
\end{aligned}$$
which indicates  the NCBF condition \eqref{eq:hrxfgu2} holds.
\end{proof}

\begin{remark}
As far as the authors know, there are no existing works on how to simplify the kind of NCBF conditions in \eqref{eq:hrxfgu2}. Aimed at this, the condition \eqref{eq:partxL} is established for input $u$, which is similar to yet different from the regular NCBF condition \eqref{eq:uinUminpart}. The prerequisite of applying \eqref{eq:partxL} is selecting a point $z^*(x)$ from $S(x)$, such as through the existing open-sourced software packages about Chebyshev ball or through the following QP:
$ z^*(x)= \argmin_{z\in S(x)}  ~z^{\top}z$. 
How to choose a  non-conservative and continuous single-valued map $z^*(x)$ from the set-valued solution map $S(x)$ or its approximation is left to study in the future. $\hfill\square$
\end{remark}

\subsection{Synthesizing a NCBF-based Safety Filter}

Based on Theorem \ref{theo:NCBFforPIM}, this section attempts to synthesize a NCBF-based safety filter to solve PVM problems. 

Given the chosen point $z^*(x) \in S(x)$, recalling the expression  in  \eqref{eq:Mzxnuqt}, the dual  set $M(z^*(x),x)$ is  a bounded polytope in $\mathbb{R}^{|\mathcal{J}_0(z^*(x),x)|}$. 
From the famous Minkowski-Weyl’s Theorem \cite{weibel2007minkowski} that every polytope can be represented in two equivalent ways (i.e., halfspace- and  vertex-  representations), 
there exists a finite point set $V_{M}(x)=\left\{ {\mu}^{k}( x)  \mid k\in \mathcal{I}_{V_{M} }(x)  \right\}$, called vertexes\footnote{Till now,  it is convenient to obtain the vertex rep. $V_M(x)$ from half-space rep. of $M(z^*(x),x)$ in \eqref{eq:Mzxnuqt} automatically, such as the open-source toolbox $\mathrm{pypoman}$ \cite{caron2018pypoman}.}, such that  
\begin{align}\label{eq:Mzxx}
  M(z^*(x),x) = \operatorname{co} \left( V_{M}(x) \right),
\end{align} 
where $\operatorname{co} (\cdot) $ denotes the convex hull operator of a set, and $\mathcal{I}_{V_{M}}(x)$ is the index set  of ${V_{M}}(x)$.

Based on \eqref{eq:Mzxx}, a direct corollary of  Theorem \ref{theo:NCBFforPIM} is shown.
 \begin{corollary}\label{coro:NCBFforPIM}
  The BF candidate $h_{r}(\cdot)$  is further a NCBF on $\mathcal{S}$, 
     if  there exist $\alpha , \bar{\epsilon}>0$,  for all $x\in  \mathcal{D}_{\bar{\epsilon}}:=\{x\mid h_r(x) \geq - \bar{\epsilon}\}$, $\exists u\in \Psi(x)$, such that 
    \begin{equation} \label{eq:partxL2}
      \Gamma(\mu^{k}(x),x, u )
      \geq - \alpha  h_{r}(x)  , ~  \forall k\in \mathcal{I}_{V_{M} }(x),  
    \end{equation}
    where $\Gamma(\mu,x, u ) := -\partial_{x} L(\mu, z^*(x), x)  \left(f +g u\right)$ for any $\mu$. 
\end{corollary}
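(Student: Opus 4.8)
The plan is to reduce the condition \eqref{eq:partxL2} of the corollary back to the condition \eqref{eq:partxL} of Theorem \ref{theo:NCBFforPIM}, and then invoke that theorem directly. The key observation is that the inner minimization in \eqref{eq:partxL} is a linear program over the compact polytope $M(z^*(x),x)$, so by the fundamental theorem of linear programming its optimal value is attained at a vertex of that polytope. Concretely, for fixed $x$ and $u$, the map $\mu \mapsto \Gamma(\mu, x, u) = -\partial_x L(\mu, z^*(x), x)(f + g u)$ is linear in $\mu$ — this is visible from \eqref{eq:partialxLmuzx}, where $\partial_x L$ depends linearly on $\mu$ through the sum $\sum_{j \in \mathcal{J}_0(z,x)} \mu_j(\cdots)$. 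Hence
\[
\min_{\mu \in M(z^*(x),x)} \Gamma(\mu, x, u) \;=\; \min_{\mu \in \operatorname{co}(V_M(x))} \Gamma(\mu, x, u) \;=\; \min_{k \in \mathcal{I}_{V_M}(x)} \Gamma(\mu^k(x), x, u),
\]
where the second equality uses the vertex representation \eqref{eq:Mzxx} together with the standard fact that a linear function on a polytope attains its minimum at a vertex.

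With this identity in hand, the remaining steps are immediate. First I would note that \eqref{eq:partxL2} states exactly that $\Gamma(\mu^k(x), x, u) \geq -\alpha h_r(x)$ for every vertex index $k \in \mathcal{I}_{V_M}(x)$, which is equivalent to $\min_{k \in \mathcal{I}_{V_M}(x)} \Gamma(\mu^k(x), x, u) \geq -\alpha h_r(x)$. Combining with the displayed identity gives $\min_{\mu \in M(z^*(x),x)} \Gamma(\mu, x, u) \geq -\alpha h_r(x)$, i.e., $\min_{\mu \in M(z^*(x),x)} -\partial_x L(\mu, z^*(x), x)(f + g u) \geq -\alpha h_r$, which is precisely condition \eqref{eq:partxL}. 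Then Theorem \ref{theo:NCBFforPIM} applies verbatim (with the same $\alpha$, $\bar\epsilon$, and $\mathcal{D}_{\bar\epsilon}$, and the same chosen point $z^*(x) \in S(x)$) and yields that $h_r(\cdot)$ is a NCBF on $\mathcal{S}$.

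I do not expect a serious obstacle here; this is essentially a restatement of Theorem \ref{theo:NCBFforPIM} in the vertex representation and the proof is short. The only point requiring a little care is the justification that the inner minimum migrates from the full polytope to its vertex set — one must invoke that $M(z^*(x),x)$ is a bounded (hence compact) polytope, which was already established in Proposition \ref{prop:dir_der_h} via the $M_1 \cap M_2$ decomposition, so that the vertex representation \eqref{eq:Mzxx} is legitimate and the minimum of the linear function $\Gamma(\cdot, x, u)$ is indeed attained at some vertex. A secondary subtlety worth a remark is that the vertex set $V_M(x)$ (and its cardinality) may vary with $x$; however, this causes no difficulty for the argument above, since the equivalence between \eqref{eq:partxL} and \eqref{eq:partxL2} is established pointwise in $x$, and Theorem \ref{theo:NCBFforPIM} only requires the pointwise existence of a feasible $u \in \Psi(x)$ for each $x \in \mathcal{D}_{\bar\epsilon}$.
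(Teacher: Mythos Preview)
Your proposal is correct and follows essentially the same approach as the paper: observe that $\Gamma(\mu,x,u)$ is linear in $\mu$ (from \eqref{eq:partialxLmuzx}), invoke the boundedness of $M(z^*(x),x)$ established in Proposition~\ref{prop:dir_der_h} so that the minimum of this linear function over the polytope is attained at a vertex, conclude the equivalence of \eqref{eq:partxL2} and \eqref{eq:partxL}, and apply Theorem~\ref{theo:NCBFforPIM}. The paper's proof is slightly terser but the argument is the same.
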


\begin{proof}
From the expression of $\partial_{x} L(\cdot)  $ in \eqref{eq:partialxLmuzx}, one can know that the scalar function $ \partial_{x} L(\mu, z^*, x) \left(f +g u\right)  $ is a linear function of $\mu, z^*,u$  respectively, hence the minimization  optimization about $\mu$ in \eqref{eq:partxL} is a LP at every $x$. Recalling the fact in \cite[Sec 2.5, Theorem and Corollary 2]{luenberger1984linear} that the minimum value of a LP is obtained at the vertexes  of the polytope, then 
$$\begin{aligned} 
\min_{\mu \in M(z^*(x), x)} \Gamma(\mu,x, u) 
&= \min_{{k} \in \mathcal{I}_{V_{M}}(x)}  \Gamma(\mu^{k},x, u ),
\end{aligned}
$$ which can be further combined with \eqref{eq:partxL} to verify \eqref{eq:partxL2}.
\end{proof}

Based on {Corollary} \ref{coro:NCBFforPIM}, a QP-based safety filter can be further built to solve Polytope Volume Monitoring:
\begin{subequations} \label{eq:sf-PVM}
  \begin{align}
    &  \bar{u}^*(x)  
   =  \argmin_{u, \delta}   ~  (u - u_0)^{\top}Q(u - u_0)
  + \gamma \delta^2 ,     \\
  & \text{s.t. }   \Gamma(\mu^{k}(x),x, u) \geq  -\alpha  h_{r}(x)  - \delta, ~ \forall k\in \mathcal{I}_{V_{M} }(x), 
  \label{eq:gammamuk} \\
  &\quad\quad  u\in\Psi(x),  ~ \delta\geq 0, \label{eq:upsix}
  \end{align}
\end{subequations}
where $\bar{u}^*(x):=\operatorname{col}(u^*(x) ,\delta^*(x))$, $u_{0}$ is any given continuous nominal control law, and the slack variable $ \delta $ is introduced to guarantee the feasibility of such QP. 
$Q\in \mathbb{R}^{m\times m},  \gamma>0$ are the (positive-definite) weighting matrix and scalar respectively, where the penalty scalar $\gamma$ is much larger than the maximal eigenvalue of $Q$.
The structure of the proposed safety filter \eqref{eq:sf-PVM} is summarized in Fig. \ref{fig:Figure-problem-solution} (b).

Till now,  the final theorem is shown to conclude this paper. 
\begin{theorem}\label{the:final}
  If for the designed control law $\bar{u}^*(x)= \operatorname{col}(u^*(x) ,\delta^*(x))$ in  \eqref{eq:sf-PVM}, it holds at every $x\in \mathcal{S}=\{ x\mid h_r(x) \geq 0  \}$ that  
  $ {u}^*(x)$  is  locally bounded and measurable, and the slack variable $\delta^*(x)$ is bounded by 
  \begin{align}
   \delta^*(x) \leq  \alpha  \epsilon_{0} .   \label{eq:deltastarx}
  \end{align} 
  Then the set $\mathcal{S} $ is forward invariant. 
  As a consequence,  for any trajectory $x(t)$ starting from $x_0 \in \mathcal{S}$, it holds that
  $$ 
  x(t)\in \mathcal{A}, ~ u^*(x(t))\in \Psi(x(t)), \quad \forall t\geq 0,
  $$ 
  i.e.,  the output set  is always nonempty and input constraint is never violated. 
\end{theorem}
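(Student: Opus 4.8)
The plan is to show that the control law $\bar u^*(x)$ from \eqref{eq:sf-PVM} satisfies all the hypotheses of the NCBF machinery already developed, so that Lemma \ref{lem:NCBF} applies to yield forward invariance of $\mathcal{S}$, and then to unpack what forward invariance of $\mathcal{S}$ means for the original problem. First I would observe that on $\mathcal{S} = \{x \mid h_r(x)\geq 0\}$ we certainly have $h_r(x) \geq -\bar\epsilon$ for any $\bar\epsilon > 0$, so the QP \eqref{eq:sf-PVM} is being solved on (a subset of) the region $\mathcal{D}_{\bar\epsilon}$ that appears in Corollary \ref{coro:NCBFforPIM}; the slack $\delta$ guarantees the QP is always feasible, so $\bar u^*(x)$ is well defined. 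The key quantitative step is to feed the bound \eqref{eq:deltastarx}, namely $\delta^*(x) \leq \alpha\epsilon_0$, into the slacked constraint \eqref{eq:gammamuk}. On $\mathcal{S}$ we have $h_r(x) = r^*(x) - \epsilon_0 \geq 0$, hence $-\alpha h_r(x) - \delta^*(x) \geq -\alpha h_r(x) - \alpha\epsilon_0 = -\alpha(h_r(x)+\epsilon_0) = -\alpha r^*(x)$. I would then want to re-absorb the $\epsilon_0$ the other way: set $\tilde h(x) := r^*(x) = h_r(x) + \epsilon_0$ (or equivalently keep $h_r$ and note the constraint achieved is $\Gamma(\mu^k(x),x,u^*(x)) \geq -\alpha h_r(x) - \delta^*(x)$), so that along trajectories in $\mathcal{S}$ the constraint \eqref{eq:partxL2} of Corollary \ref{coro:NCBFforPIM} holds with the class-$\mathcal{K}$ function $\alpha(s) = \alpha \cdot s$ evaluated at $h_r$ shifted appropriately — the clean statement is that $\Gamma(\mu^k(x),x,u^*(x)) \geq -\alpha h_r(x) - \alpha \epsilon_0$, which is exactly what is needed to conclude $h'_r(x;f+gu^*(x)) \geq -\alpha(h_r(x)+\epsilon_0)$; since $s \mapsto \alpha(s+\epsilon_0)$ is not extended class-$\mathcal{K}$ (it is positive at $0$), the correct reading is that we get the even stronger inequality $h'_r(x;f+gu^*) \geq -\alpha h_r(x)$ on $\partial\mathcal{S}$ where $h_r = 0$, which is what forward invariance actually requires via Lemma \ref{lem:NCBF}.

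Concretely, the steps in order: (1) invoke Theorem \ref{lem:the-function-sd-candidate-NCBF} to recall $h_r$ is locally Lipschitz on $\mathcal{A}$ and is an NBF candidate on $\mathcal{S}\subset\mathcal{A}$, and Proposition \ref{prop:dir_der_h} for the directional-derivative formula \eqref{eq:hxotxmin}; (2) show the QP \eqref{eq:sf-PVM} is feasible for every $x\in\mathcal{S}$ (the pair $(u,\delta)$ with any $u\in\Psi(x)$ — nonempty by Assumption \ref{ass:1} — and $\delta$ large enough works, using that $\Gamma$ is affine hence finite); (3) combine constraint \eqref{eq:gammamuk} with the hypothesis $\delta^*(x)\leq\alpha\epsilon_0$ and $h_r(x)\geq 0$ on $\mathcal{S}$ to get $\Gamma(\mu^k(x),x,u^*(x)) \geq -\alpha h_r(x) - \alpha\epsilon_0 = -\alpha r^*(x)$, and in particular, using $\min_{k}\Gamma = \min_{\mu\in M}\Gamma$ from the proof of Corollary \ref{coro:NCBFforPIM}, conclude via \eqref{eq:hxotxmin} and Theorem \ref{theo:NCBFforPIM}'s argument that $h'_r(x;f(x)+g(x)u^*(x)) \geq -\alpha h_r(x)$ whenever $h_r(x) = 0$ — more carefully, one checks the NCBF inequality \eqref{eq:cbf condition} holds on the open set $\mathcal{D} = \operatorname{Int}(\mathcal{D}_{\bar\epsilon})$ with the class-$\mathcal{K}$ function $\alpha(\cdot)$; (4) apply Lemma \ref{lem:NCBF} to the NCBF $h_r$ on $\mathcal{S}$ to get that $\mathcal{S}$ is forward control-invariant, with the specific measurable locally bounded input being $u^*(x(t))$ by the local boundedness/measurability hypothesis; (5) translate: $x(t)\in\mathcal{S}$ means $h_r(x(t))\geq 0$, i.e. $r^*(x(t))\geq\epsilon_0>0$, which by \eqref{eq:rxtgeqe} gives $\operatorname{Int}(\Phi(x(t)))\neq\emptyset$, i.e. $x(t)\in\mathcal{A}$; and $u^*(x(t))\in\Psi(x(t))$ is immediate from constraint \eqref{eq:upsix} of the QP.

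I expect the main obstacle to be step (3), reconciling the slack bound with the class-$\mathcal{K}$ formalism: the raw output of the QP gives $h'_r \geq -\alpha h_r - \delta^*$ with $\delta^* \leq \alpha\epsilon_0$, and one must argue carefully that this still certifies forward invariance of $\mathcal{S} = \{h_r\geq 0\}$. The cleanest route is to note that it suffices to verify the invariance condition on the boundary $\partial\mathcal{S} \subseteq \{h_r = 0\}$ (by the NBF-candidate property and Nagumo-type reasoning baked into Lemma \ref{lem:NCBF}); there $-\alpha h_r(x) - \delta^*(x) = -\delta^*(x) \geq -\alpha\epsilon_0$, and since we want $h'_r(x;\cdot)\geq 0$ — wait, that is not quite $-\alpha\epsilon_0 \geq 0$. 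The actual resolution, which I would spell out, is to work with the shifted barrier: forward invariance of $\mathcal{S}$ is equivalent to keeping $r^*(x(t))\geq\epsilon_0$, and defining $\hat h(x) = r^*(x) - \epsilon_0 = h_r(x)$, the QP delivers on $\mathcal{S}$ exactly $\hat h'(x;f+gu^*) = h'_r(x;f+gu^*) \geq \min_k\Gamma(\mu^k,x,u^*) \geq -\alpha h_r(x) - \delta^*(x)$; the condition $\delta^*(x)\leq\alpha\epsilon_0$ is precisely calibrated so that on $\partial\mathcal{S}$ (where $h_r=0$) we need the modified class-$\mathcal{K}$-type inequality with $\tilde\alpha(s) := \alpha(s + \epsilon_0) - \alpha\epsilon_0$, which does vanish at $s=0$ and is extended class-$\mathcal{K}$; then $h'_r(x;f+gu^*) \geq -\alpha(h_r(x)+\epsilon_0) + \alpha\epsilon_0 = -\tilde\alpha(h_r(x))$, and applying Lemma \ref{lem:NCBF} with $\tilde\alpha$ in place of $\alpha$ closes the argument. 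Everything else — feasibility, the affine/LP structure, and the final translation — is routine given the earlier propositions.
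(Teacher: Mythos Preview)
Your overall structure---feasibility of the QP via the slack, hard input constraint via \eqref{eq:upsix}, pushing the slacked constraint \eqref{eq:gammamuk} through Corollary~\ref{coro:NCBFforPIM} and Proposition~\ref{prop:dir_der_h} to bound $h'_r$, then invoking Lemma~\ref{lem:NCBF}---is exactly the paper's. You even write the key line the paper uses in your step~(3): $\Gamma(\mu^{k}(x),x,u^{*}(x)) \geq -\alpha h_r(x)-\alpha\epsilon_0 = -\alpha r^{*}(x)$.

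Where you go wrong is the final reconciliation. Your $\tilde\alpha$ construction contains an algebra slip: from $h'_r \geq -\alpha h_r - \delta^{*} \geq -\alpha(h_r+\epsilon_0)$ you then assert $h'_r \geq -\alpha(h_r+\epsilon_0) + \alpha\epsilon_0$, but that extra $+\alpha\epsilon_0$ has no justification. Since (with the linear $\alpha$) your $\tilde\alpha(s)=\alpha(s+\epsilon_0)-\alpha\epsilon_0=\alpha s$, you are effectively claiming $h'_r \geq -\alpha h_r$, which is strictly stronger than what the QP delivers and does not follow.

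The paper avoids this tangle by the shift you already noticed but abandoned: work with $r^{*}$ rather than $h_r$. Since $h_r=r^{*}-\epsilon_0$, one has $(r^{*})'(x;\dot x)=h'_r(x;\dot x)$, and the slacked constraint plus $\delta^{*}(x)\leq\alpha\epsilon_0$ gives
\[
(r^{*})'(x;\dot x)\;\geq\;-\alpha h_r(x)-\delta^{*}(x)\;=\;-\alpha r^{*}(x)+\alpha\epsilon_0-\delta^{*}(x)\;\geq\;-\alpha r^{*}(x),
\]
after which the paper invokes Lemma~\ref{lem:NCBF} directly. So keep your step~(3) and drop the $\tilde\alpha$ detour; the rest of your plan (feasibility, measurability/boundedness of $u^{*}$, $u^{*}\in\Psi(x)$ from \eqref{eq:upsix}, and $\mathcal{S}\subset\mathcal{A}$ via \eqref{eq:rxtgeqe}) matches the paper verbatim.
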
  

\begin{proof}
 Since  Eq. \eqref{eq:gammamuk}  denotes the  soft constraints, the feasibility of the QP-based safety filter is guaranteed whenever $\Psi(x)\neq \emptyset$. The solution of closed-loop system  is well-defined by the assumption that ${u}^*(x(t))$ is be measurable and  locally bounded \cite{glotfelter2020nonsmooth}. 
Meanwhile, the input constraint is never violated due to the hard constraints in \eqref{eq:upsix}.

Under the designed safety filter \eqref{eq:sf-PVM}, one has
\begin{equation}\label{eq:h_rdot}
  \begin{aligned}
(r^{*})'(x(t) ; \dot{x}(t) ) &=   h'_{r}(x(t) ; \dot{x}(t) ) \\
&  \geq -\alpha h_r(x(t)) - \delta^*(x(t)) \\
&=  -\alpha r^*(x(t)) + \alpha \epsilon_{0}   - \delta^*(x(t)).      
  \end{aligned}
\end{equation}
  When the inequality  \eqref{eq:deltastarx} holds, 
\eqref{eq:h_rdot} will be reduced to 
$(r^{*})'(x(t) ; \dot{x}(t) )  \geq  -\alpha r^*(x(t)) $. 
By referring to  Lemma \ref{lem:NCBF},  the set $\mathcal{S} $ is forward invariant. 
Recalling the relationship in \eqref{eq:rxtgeqe}, one has $x(t) \in \mathcal{A} = \{ x \mid \operatorname{Int} (\Phi(x)) \neq \emptyset \}, ~\forall t\geq0$. 
\end{proof}

\begin{remark}\label{remark:final}
In {Theorem} \ref{the:final},  
the condition $\delta^*(x) \leq \alpha  \epsilon_0$ can be guaranteed by choosing sufficiently large $\alpha \epsilon_0$. However,  large gain $\alpha$ leads to an agrressive behaviour and too large $\epsilon_0$ may cause the conservatism.  
How to further modify the proposed safety filter to a continuous one \cite{ong2023nonsmooth, isaly2024feasibility} or even  a smooth one \cite{cohen2023characterizing} is  left for the future study.  $\hfill\square$
\end{remark}

\section{Numerical Simulation Example}\label{sec:numerical-simu}
\label{sec:simu}
  
In this section, a case study of feasible space monitoring of multiple CBFs  as in Example \ref{exa1},  is carried out to evaluate the performance of the proposed safety filter \eqref{eq:sf-PVM}.  

\emph{1) Simulation Setting:} Consider the scenario in Fig. \ref{fig:Simuresults_comparisons} for a vehicle to perform a reach-avoid task under multiple obstacles and input constraints, in which  feasible space monitoring of multiple CBFs is required. The vehicle is modeled as a nonholonomic dynamic unicycle: 
 $$\dot{p}_x=v \cos \theta,  ~\dot{p}_y=v \sin \theta,  ~\dot{v}=a,  ~\dot{\theta}=\omega.$$ 
 Variables $x=[{p}_x,{p}_y,v, \theta]^{\top} $ are states, and  $u=[a,\omega]^{\top}$ are control inputs, restricted in $ \mathcal{U}=[-2,2]\times[-2,2] $. The reference controller in \cite{parwana2023feasible}, with parameters $k_p, k_\theta, k_v$,  is applied  as the nominal control $u_0$ to drive the vehicle from the initial point to the goal.  The barrier function candidate is designed as 
 $$h_i(x)= \|[{p}_x, {p}_y]^{\top} - [{o}_{xi}, {o}_{yi}]^{\top} \|^2  - R^2,$$
where $[{o}_{xi}, {o}_{yi}]^{\top}$ is the center of the $i$-th obstacle and $R$ is the pre-defined safety distance. The positions of obstacles, initial and goal points can be seen from Fig. \ref{fig:Simuresults_comparisons}.  The HOCBF \cite{tan2021high} (with order $2$ and  parameters $\alpha_1=10.0,\alpha_2= 6.0$) is utilized to transform the position constraints $h_i(x)\geq0$ to the affine constraints on input $u$, denoted as $u\in \mathcal{U}_c(x)$. According to Example \ref{exa1},  $\Phi(x)= \Psi(x)= \mathcal{U} \cap \mathcal{U}_{c}(x(t))$, the proposed safety filter \eqref{eq:sf-PVM} can be applied to solve the PVM problem here. 
  
To show the effectiveness of \eqref{eq:sf-PVM}  in improving the feasibility, it is compared with the usual safety filter that does not have the capability of feasible space monitoring: 
 \begin{align}
  u^*(x)  =  &  \argmin_{u\in\Psi(x) }   ~  (u - u_0)^{\top}Q(u - u_0).  \label{eq:sf-PVM-2}
   \end{align}
In the comparison, the same nominal control $u_0$ is applied with $k_p=0.5, k_\theta=1$, and
the same matrix $Q=[10,0;0,1]$ is applied in \eqref{eq:sf-PVM} and \eqref{eq:sf-PVM-2}. Other parameters only appeared  in \eqref{eq:sf-PVM} are set as  $\epsilon_0=0.6$, $\alpha= 15$ and $\gamma=500$.

\emph{2) Simulation Results:} The numerical simulation is conducted on a laptop with an Intel Core i7-10750H CPU and 16 GB RAM. A Python simulation environment is built, similar to the one in \cite{parwana2023feasible}. 
In \cite{parwana2023feasible}, the gradient of implicit NBF candidate $r^*(x)$ is approximated by some heuristic gradient value.
 In this paper, the in-depth analysis is performed to design an alternative one \eqref{eq:sf-PVM} without the need of the gradient of $h_r(x)$. 

\begin{figure*}[!htp]
  \centering
  \subfloat[The trajectories in the $XY$ plane. ]{\includegraphics[width=0.33\textwidth]{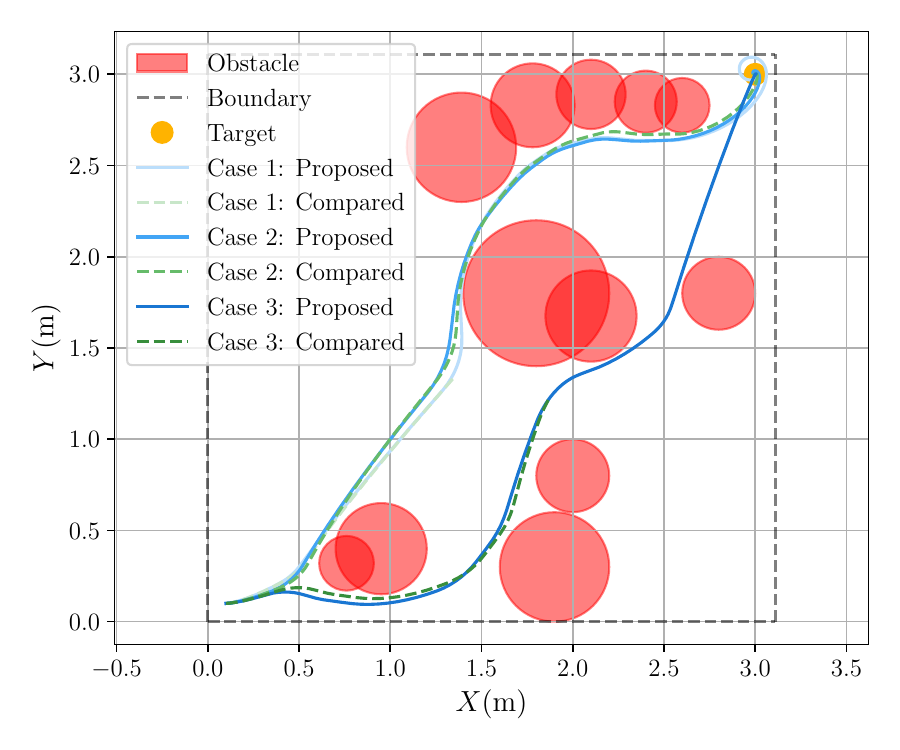}}
  \subfloat[The time response of $r^*(x(t))$.
  ]{\includegraphics[width=0.33\textwidth]{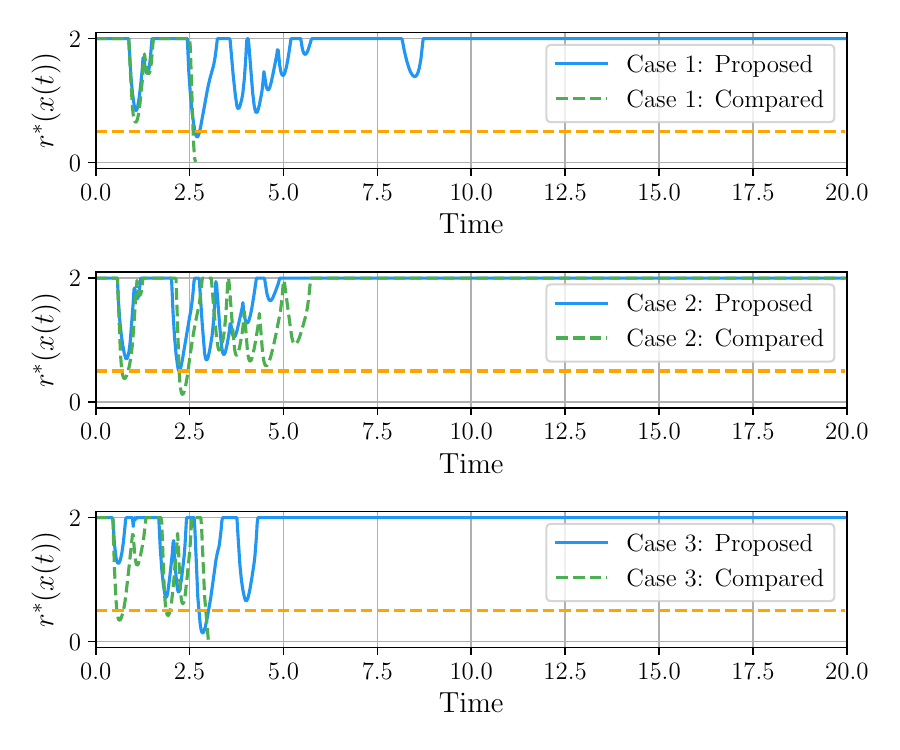}}
  \subfloat[Time responses of  $\delta^*(x(t))$ and $u^*(x(t))$.]{\includegraphics[width=0.33\textwidth]{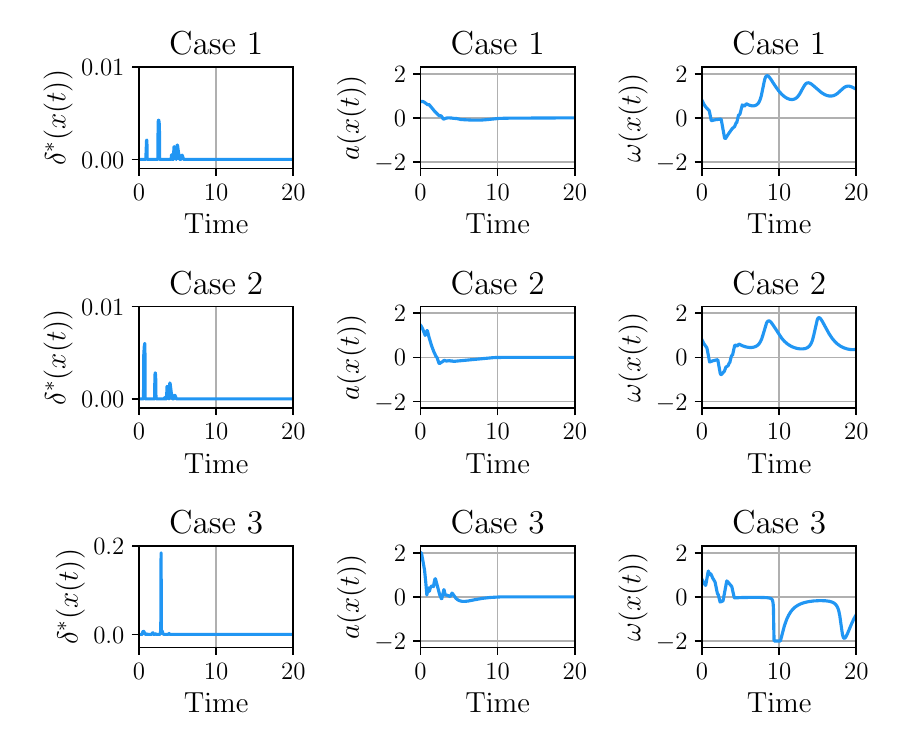}}
  \caption{Simulation results in different scenarios: Case 1 ($k_v=0.5$), Case 2 ($k_v=1.0$), and Case 3 ($k_v=2.0$). Subplots  (a), (b) illustrate the comparison between the proposed safety filter \eqref{eq:sf-PVM},  using blue solid line, and the usual one \eqref{eq:sf-PVM-2} described by the green dashed line. The orange line indicates the value of user-defined threshold value $ \epsilon_0=0.6 $. Subplot (c) shows the boundedness of slack variable $\delta^*(x(t))$ and the continuity of inputs $u^*(x(t))=[a(x(t)),\omega(x(t))]^{\top}$ under the proposed safety filter \eqref{eq:sf-PVM}. }
  \label{fig:Simuresults_comparisons}
    \vspace{-0.6cm}
\end{figure*}

To evaluate the performance sufficiently, the nominal control law $u_0$ with three different values of the gain $k_v$ are introduced in the simulation so as to produce different  moving behavior, named as Case 1 ($k_v=0.5$), Case 2 ($k_v=1.0$), and Case 3 ($k_v=2.0$), 
 as seen in Fig. \ref{fig:Simuresults_comparisons}. 
 
Fig. \ref{fig:Simuresults_comparisons}(a) shows the trajectories of the vehicle in the $XY$ plane. It can be observed that 
under the proposed safety filter \eqref{eq:sf-PVM}, the vehicle successfully avoids the obstacles and reaches the goal in all cases.  Under the compared method \eqref{eq:sf-PVM-2}, in Case 2, the vehicle can successfully avoid the obstacles and reaches the goal, while in Case 1 and Case 3, the vehicle would get stuck in the obstacles. 
Fig. \ref{fig:Simuresults_comparisons}(b) illustrates the time response of $r^*(x(t))$,  which demonstrates the range of the feasible  space of these two different safety filters. From it, one can observe that $r^*(x(t))$ is  always  greater than zero and greater than $ \epsilon_0=0.6 $ in most cases, which shows that the proposed method can maintain feasibility.
As a comparison,  the traditional one \eqref{eq:sf-PVM-2} becomes infeasible in Case 1 and Case 3, and its feasible  space is much smaller than the proposed one's in Case 2.  
In Fig. \ref{fig:Simuresults_comparisons}(c),  the time responses of slack variable $\delta^*(x(t))$ and inputs  $u^*(x(t))=[a(x(t)),\omega(x(t))]^{\top}$ under the proposed safety filter \eqref{eq:sf-PVM} in the above three cases are shown. One can observe that in all cases, 
$\delta^*(x(t))$ is equal to $0$ approximately and much lower than the threshold $ \alpha  \epsilon_0=9$. 
Besides, the inputs $u^*(x(t))$ are continuous in all cases. 
 These observations confirm  the reasonability of the theoretical assumptions, analysis and  conclusions in Theorem \ref{the:final} and Remark \ref{remark:final}. 
 
\section{Conclusion}
\label{sec:conclusion}

This paper formulates a class of control problems, called polytope volume monitoring  (PVM), for which a parametric linear program-based control barrier function approach is proposed.
This is achieved by establishing a bridge between the existing nonsmooth CBF and parametric optimization theory, where some useful lemmas about the continuity and directional differentiability of
 parametric programs are utilized to build nonsmooth CBF conditions, based on which a slacked QP-based safety filter is proposed to tackle the PVM problem step-by-step. The biggest limitation of this work is that the continuity of the proposed safety filter is not discussed formally.
Future works involve designing continuous and smooth safety filters to tackle PVM problems, extending the proposed analysis and synthesis framework to  other relevant problems and so on. 
 
\appendix

\setcounter{equation}{0}
\renewcommand{\theequation}{A.\arabic{equation}}
\renewcommand{\thelemma}{A.\arabic{lemma}}
\renewcommand{\thedefinition}{A.\arabic{definition}}
\setcounter{lemma}{0}
\setcounter{definition}{0}

\section{Appendix}
\subsection{Nonsmooth Parametric Optimization}\label{sub:pcp}

Let $\mathbb{T} \subset \mathbb{R}^n$ denote some open parameter set.  For all $\bar{x} \in \mathbb{T}$, consider  the convex parametric program: 
\begin{align}
 v(\bar{x}):= \min_{\bar{z}} &~ \tilde{f}  (\bar{z}, \bar{x}),  \label{eq:vtminxfxt} \\
\text{ s.t. } &\bar{z} \in {F}(\bar{x}):=\left\{\bar{z} \in \mathbb{R}^{n_z} \mid   \tilde{g}_j(\bar{z}, \bar{x}) \leq 0, j \in \mathcal{J}\right\}, \nonumber
\end{align}
with $\mathcal{J}:=\{1, 2, \ldots, N\}$. For convenience, let  the set of (global) minimizers according to \eqref{eq:vtminxfxt} be   denoted as 
\begin{equation}\label{eq:Sbartxin}
  S( \bar{x})  :=\left\{\bar{z} \in F(\bar{x}) \mid \tilde{f}(\bar{z},  \bar{x})=v(\bar{x})\right\}.  
\end{equation}
For $\bar{x} \in \mathbb{T}$ and feasible $\bar{z} \in F(\bar{x})$, the active index set is defined as $\mathcal{J}_0(\bar{z}, \bar{x}):=\left\{j \in \mathcal{J} \mid \tilde{g}_j(\bar{z}, \bar{x})=0\right\}$. 
Throughout this section, $\tilde{f},  \tilde{g}_j, j \in \mathcal{J}$, are continuously differentiable on $\mathbb{R}^{n_z} \times \mathbb{T}$, notation $\tilde{f},  \tilde{g}_j \in \mathcal{C}^1$. The following Constraint Qualification conditions can be found in \cite[(1.3) and (1.4)]{still2018lectures} and \cite[(11) and (12)]{solodov2010constraint}.
  
\begin{definition}\label{def:LICQ-MFCQ}
  There are two constraint qualification conditions, respectively defined as:

  1) The 
   Mangasarian Fromovitz
    Constraint Qualification (MFCQ) is said to hold at $(\bar{z}, \bar{x})$ for $\bar{z} \in F(\bar{x})$ if
     there exists  $\xi$ such that $
   \partial_z \tilde{g}_j(\bar{z}, \bar{x}) \xi<0,  j \in \mathcal{J}_0(\bar{z}, \bar{x})$. 
  
  2) Linear Independence Constraint Qualification (LICQ)  holds at $(\bar{z}, \bar{x})$   if  vectors    $ \partial_z \tilde{g}_j(\bar{z}, \bar{x}), j \in \mathcal{J}_0(\bar{z}, \bar{x})$ are linearly independent.  
\end{definition}
  
The continuity and directional differentiability  properties of the value function of parametric programs  with only inequalities  have been studied in \cite{still2018lectures}. 
 
\begin{lemma}\cite[Lemma 6.2]{still2018lectures}\label{theorem:PCP}
   Consider the parametric convex program  in  \eqref{eq:vtminxfxt} and assume there exists a neighborhood $B_{\varepsilon_0}(\bar{x}), \varepsilon_0>0$, such that $\cup_{x \in B_{\varepsilon_0}(\bar{x})} F(x) \subset C_0$, with $C_0$ compact.
  If the MFCQ condition holds at $(\bar{z}, \bar{x})$ for any $\bar{z} \in S(\bar{x})$, then the value function $v(\cdot)$ is locally Lipschitz continuous at $\bar{x}$.
\end{lemma}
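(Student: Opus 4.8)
The plan is to prove the two-sided Lipschitz estimate $|v(x)-v(\bar x)| \le L\|x-\bar x\|$ for all $x$ in a neighborhood of $\bar x$, by exploiting the fact that MFCQ forces the feasible-set map $F(\cdot)$ to behave in a Lipschitzian way near the minimizers, while the objective $\tilde f$ is locally Lipschitz on the compact set $C_0$. First I would record the preliminaries that follow from the standing hypotheses: since $F(x)\subset C_0$ with $C_0$ compact and $\tilde f$ continuous, each $v(x)$ is finite and attained, and $S(\bar x)$ is a nonempty compact subset of $C_0$ (convex, by convexity of the program, though this is not essential to the argument).

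Next I would establish the upper estimate $v(x)\le v(\bar x)+L_1\|x-\bar x\|$. Fix a minimizer $z^*\in S(\bar x)$ and let $\xi$ be an MFCQ direction from Definition \ref{def:LICQ-MFCQ}, so $\partial_z \tilde g_j(z^*,\bar x)\xi<0$ for every active index $j\in\mathcal J_0(z^*,\bar x)$. For $x$ near $\bar x$ I would set $z(x)=z^*+t(x)\xi$ with a step $t(x)$ of order $\|x-\bar x\|$. A first-order expansion of $\tilde g_j(z(x),x)$ about $(z^*,\bar x)$, combined with the negative MFCQ margin and the $\mathcal C^1$ (hence locally Lipschitz) dependence on the parameter, shows that each active constraint is driven below zero once $t(x)=c\|x-\bar x\|$ for a constant $c$ fixed by the margin and the gradient bounds; the inactive constraints, being strictly negative at $(z^*,\bar x)$, remain negative under small perturbations. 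Thus $z(x)\in F(x)$, and local Lipschitzness of $\tilde f$ on $C_0$ yields $v(x)\le \tilde f(z(x),x)\le \tilde f(z^*,\bar x)+L_1\|x-\bar x\|=v(\bar x)+L_1\|x-\bar x\|$. In particular $v$ is upper semicontinuous at $\bar x$; together with the lower semicontinuity that comes from closedness of the graph of $F$ and the uniform bound $F(x)\subset C_0$, this gives continuity of $v$ at $\bar x$.

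The reverse estimate $v(\bar x)\le v(x)+L_2\|x-\bar x\|$ is the crux, since it requires a feasible-point construction that starts from a minimizer $z_x\in S(x)$ and returns to $F(\bar x)$, so an MFCQ direction must be available near $z_x$ rather than only at points of $S(\bar x)$. My plan is to upgrade the pointwise hypothesis to a uniform one: since $S(\bar x)$ is compact and the gradients $\partial_z\tilde g_j$ are continuous, a covering argument produces a neighborhood $\mathcal N$ of $S(\bar x)\times\{\bar x\}$ and a common margin $\delta>0$ such that at every $(z,x)\in\mathcal N$ there is a direction $\xi$ with $\partial_z\tilde g_j(z,x)\xi\le-\delta$ for all $j$ active at $(z,x)$. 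By the continuity of $v$ just established and the uniform bound $S(x)\subset C_0$, the solution map is outer semicontinuous, so every $z_x\in S(x)$ lies arbitrarily close to $S(\bar x)$ as $x\to\bar x$, whence $(z_x,x)\in\mathcal N$ for $x$ near $\bar x$. Running the same perturbation construction, now from $z_x$ toward feasibility for the parameter $\bar x$, produces $z'\in F(\bar x)$ with $\|z'-z_x\|$ of order $\|x-\bar x\|$, so that $v(\bar x)\le \tilde f(z',\bar x)\le \tilde f(z_x,x)+L_2\|x-\bar x\|=v(x)+L_2\|x-\bar x\|$.

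Combining the two estimates with $L=\max\{L_1,L_2\}$ gives the local Lipschitz bound and completes the proof. The main obstacle is the lower estimate, and within it the propagation of MFCQ from the individual minimizers at $\bar x$ to the uniform condition on $\mathcal N$, together with the outer semicontinuity $S(x)\to S(\bar x)$ that guarantees the perturbed minimizers actually fall inside $\mathcal N$. Once these are in place, the quantitative feasible-point construction — controlling the step length $t(x)$ through the uniform margin $\delta$ and verifying that both active and inactive constraints are restored — is the remaining technical core, and it is precisely here that compactness of $C_0$ and the $\mathcal C^1$ regularity of $\tilde f,\tilde g_j$ are used to fix the uniform constants.
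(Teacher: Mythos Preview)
The paper does not supply its own proof of this lemma: it is quoted verbatim as \cite[Lemma 6.2]{still2018lectures} in the appendix and used as a black box in the proof of Theorem~\ref{lem:the-function-sd-candidate-NCBF}. There is therefore nothing in the paper to compare your argument against.

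That said, your outline is the standard route and is correct in structure. The two-sided estimate via the MFCQ feasible-point construction, the propagation of MFCQ from $S(\bar x)$ to a uniform neighborhood by compactness and continuity of the constraint gradients, and the use of outer semicontinuity of the solution map to guarantee $z_x$ falls in that neighborhood, are exactly the ingredients of the classical proof (see, e.g., Still's lecture notes or Bonnans--Shapiro). One minor point worth tightening if you write it out in full: the claim that $v$ is lower semicontinuous at $\bar x$ (needed before invoking outer semicontinuity of $S$) should be justified explicitly from the closed-graph property of $F$ together with the uniform compact bound $C_0$, rather than asserted; this is straightforward but is the one place where the compactness hypothesis on $C_0$ is doing work beyond merely bounding constants.
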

    
  Define the (local) Lagrangian function near $(\bar{z}, \bar{x})$ as:  
\begin{equation}\label{eq:Lxtmu}
  L(\mu, z, x )=\tilde{f}(z, x)+\sum_{j \in \mathcal{J}_0(\bar{z}, \bar{x})} \mu_j \tilde{g}_j(z, x),  
\end{equation}
where $ {\mu}:=\left({\mu}_j, j \in \mathcal{J}_0(\bar{z}, \bar{x})\right)\in \mathbb{R}^{\left| \mathcal{J}_0(\bar{z}, \bar{x})\right|}$, collecting all multiplers $\mu_j$ corresponding to active constraints in $ \mathcal{J}_0(\bar{z}, \bar{x})$. Then the partial derivatives of $L(\cdot)$  are respectively given by   
\begin{subequations}\label{eq:Lxtmut}
\begin{align}
\partial_x L(\mu, z, x) :=&\partial_x \tilde{f}(z, x) +\sum_{j \in \mathcal{J}_0(\bar{z}, \bar{x})}  \mu_j  \partial_x   \tilde{g}_j(z, x), \nonumber \\  
\partial_z L(\mu, z, x) :=&\partial_z \tilde{f}(z, x) +\sum_{j \in \mathcal{J}_0(\bar{z}, \bar{x})}  \mu_j  \partial_z \tilde{g}_j(z, x). \nonumber 
\end{align}
\end{subequations}
 
\begin{lemma} \cite[Theorem 7.3]{still2018lectures} \label{theorem:PCP2}
  Consider the parametric convex program  in  \eqref{eq:vtminxfxt}   and let $F(\bar{x})$ be compact. If 
   MFCQ  holds at  $(\bar{z}, \bar{x})$ satisfying $\bar{z} \in F(\bar{x})$,
  then $v(\cdot)$ is directional differentiable at $\bar{x}$ in any direction $d ,\|d\|=1$ with
  \begin{align}
  v'(\bar{x} ; d)=\min _{\bar{z} \in S(\bar{x})} 
   \max _{\mu  \in M(\bar{z}, \bar{x})} \partial_x L(\mu, \bar{z}, \bar{x} ) d , 
  \label{eq:nablavz}
  \end{align}
  where  the  minimizer set $S(\cdot)$ is defined in \eqref{eq:Sbartxin}, and the set of multipliers $M(\cdot)$ is defined as  
  \begin{align}
   M(\bar{z}, \bar{x}) &:=\left\{\mu \mid \partial_z L(\mu, \bar{z}, \bar{x} )=0^{\top},  \mu \geq 0 \right\}.  
   \label{eq:Mbarxbart}
  \end{align}
  \end{lemma}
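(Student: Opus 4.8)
The plan is to read $v'(\bar x;d)$ as the one-sided derivative of a marginal (value) function and to recover the stated min--max formula by pairing a limiting argument over perturbed optimizers with linear-programming duality on the linearized active subproblem. First I would extract the structural consequences of the hypotheses. Since $F(\bar x)$ is compact and $\tilde f(\cdot,\bar x)$ is continuous, the solution set $S(\bar x)$ in \eqref{eq:Sbartxin} is nonempty and compact. Since MFCQ holds at every $(\bar z,\bar x)$ with $\bar z\in F(\bar x)$, the multiplier set $M(\bar z,\bar x)$ in \eqref{eq:Mbarxbart} is a nonempty compact polytope for each $\bar z\in S(\bar x)$ (MFCQ is precisely the boundedness-of-multipliers condition). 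By convexity the KKT conditions defining $M(\bar z,\bar x)$ are both necessary and sufficient for optimality, which I will use freely.

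Second, for a fixed $\bar z\in S(\bar x)$ I would analyze the \emph{linearized} directional subproblem in the first-order displacement $\xi$ of $z$,
\begin{equation*}
\psi(\bar z):=\min_{\xi}\Big\{\,\partial_z\tilde f(\bar z,\bar x)\,\xi+\partial_x\tilde f(\bar z,\bar x)\,d \ \big|\ \partial_z\tilde g_j(\bar z,\bar x)\,\xi+\partial_x\tilde g_j(\bar z,\bar x)\,d\le 0,\ j\in\mathcal{J}_0(\bar z,\bar x)\,\Big\}.
\end{equation*}
This is an LP in $\xi$, and its Lagrangian dual is exactly $\max_{\mu\in M(\bar z,\bar x)}\partial_x L(\mu,\bar z,\bar x)\,d$ once one checks that the equality block of the dual is the condition $\partial_z L(\mu,\bar z,\bar x)=0^\top$ defining $M$. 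Under MFCQ the primal is strictly feasible (scaling the MFCQ vector $\xi_0$, which satisfies $\partial_z\tilde g_j(\bar z,\bar x)\xi_0<0$ on active $j$, by a large factor makes every linearized active inequality strict), so strong LP duality gives $\psi(\bar z)=\max_{\mu\in M(\bar z,\bar x)}\partial_x L(\mu,\bar z,\bar x)\,d$. Minimizing over $\bar z\in S(\bar x)$ then identifies the right-hand side of \eqref{eq:nablavz} with $\min_{\bar z\in S(\bar x)}\psi(\bar z)$, so it remains to prove $v'(\bar x;d)=\min_{\bar z\in S(\bar x)}\psi(\bar z)$.

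Third, I would establish the two matching inequalities. For the upper bound $v'(\bar x;d)\le\min_{\bar z}\psi(\bar z)$, fix any $\bar z\in S(\bar x)$ and any feasible $\xi$ for its linearized problem; using MFCQ I would build a feasible curve $z(\sigma)=\bar z+\sigma\xi+o(\sigma)$ for the perturbed set $F(\bar x+\sigma d)$, so that $v(\bar x+\sigma d)\le \tilde f(z(\sigma),\bar x+\sigma d)=v(\bar x)+\sigma\big(\partial_z\tilde f\,\xi+\partial_x\tilde f\,d\big)+o(\sigma)$, and then optimize over $\xi$ to get $\limsup_{\sigma\to 0^+}\sigma^{-1}\big(v(\bar x+\sigma d)-v(\bar x)\big)\le\psi(\bar z)$. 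For the lower bound, I would take perturbed optimizers $z(\sigma)\in S(\bar x+\sigma d)$, extract via compactness of a uniform enclosure $C_0$ of nearby feasible sets a subsequence $z(\sigma_k)\to z_0\in S(\bar x)$, and Taylor-expand $\tilde f$ and the active $\tilde g_j$ along it, inserting the KKT multipliers at $z_0$ to obtain $\liminf_{\sigma\to 0^+}\sigma^{-1}\big(v(\bar x+\sigma d)-v(\bar x)\big)\ge\psi(z_0)\ge\min_{\bar z}\psi(\bar z)$. The two bounds coincide, which simultaneously proves that the directional derivative exists and equals the claimed formula.

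The main obstacle I expect is the feasible-path construction in the upper-bound step: promoting a first-order feasible displacement $\xi$ of the linearized active constraints to an honest point of the curved set $F(\bar x+\sigma d)$ with a genuine $o(\sigma)$ remainder, uniformly in $\bar z\in S(\bar x)$. This is exactly what MFCQ purchases, through metric regularity (a Lyusternik-type correction) or an implicit-function argument on an enlarged active index set; the uniformity is then secured by compactness of $S(\bar x)$ together with continuity of the data. A secondary subtlety is ensuring that the \emph{inactive} constraints remain inactive for small $\sigma$, which follows from their strict feasibility at $\bar z$ and continuity of $\tilde g_j$.
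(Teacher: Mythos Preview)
The paper does not supply its own proof of this lemma; it is quoted verbatim as \cite[Theorem~7.3]{still2018lectures} in the Appendix and used as an imported tool, so there is nothing in the paper to compare your argument against.

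That said, your sketch is the standard route and is essentially correct: reduce $v'(\bar x;d)$ to the value of a linearized directional LP via matching upper and lower bounds (feasible-path construction from MFCQ for the upper bound, compactness-based extraction of perturbed minimizers for the lower bound), and then read off the min--max formula by LP strong duality, noting that dual feasibility is exactly $\partial_z L=0$, $\mu\ge 0$. One point worth tightening is your appeal to ``a uniform enclosure $C_0$ of nearby feasible sets'' in the lower-bound step: the lemma as stated only assumes $F(\bar x)$ compact, so you should argue (from MFCQ at every point of $F(\bar x)$ plus compactness of $F(\bar x)$) that the feasible-set mapping is upper semicontinuous at $\bar x$, which yields such a $C_0$ for all sufficiently small $\sigma$. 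With that in place your proposal would constitute a complete proof along the lines of the cited reference.
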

   
  If LICQ holds, then $M(\bar{z}, \bar{x})$ will be a singleton, i.e., the multiplier ${\mu}$ will be uniquely determined \cite[EX. 2.6]{still2018lectures}.
The min-max expression of directional derivative in \eqref{eq:nablavz} has also been pointed out by  other literature 
  \cite[Theorem 2.4]{best1990stability}, \cite[Theorem 4.24]{bonnans2013perturbation}
  yet under different conditions.

\bibliographystyle{IEEEtran}
\bibliography{bio/bibliography.bib}

\begin{thebibliography}{10}
\providecommand{\url}[1]{#1}
\csname url@samestyle\endcsname
\providecommand{\newblock}{\relax}
\providecommand{\bibinfo}[2]{#2}
\providecommand{\BIBentrySTDinterwordspacing}{\spaceskip=0pt\relax}
\providecommand{\BIBentryALTinterwordstretchfactor}{4}
\providecommand{\BIBentryALTinterwordspacing}{\spaceskip=\fontdimen2\font plus
\BIBentryALTinterwordstretchfactor\fontdimen3\font minus
  \fontdimen4\font\relax}
\providecommand{\BIBforeignlanguage}[2]{{%
\expandafter\ifx\csname l@#1\endcsname\relax
\typeout{** WARNING: IEEEtran.bst: No hyphenation pattern has been}%
\typeout{** loaded for the language `#1'. Using the pattern for}%
\typeout{** the default language instead.}%
\else
\language=\csname l@#1\endcsname
\fi
#2}}
\providecommand{\BIBdecl}{\relax}
\BIBdecl

\bibitem{ames2019control}
A.~D. Ames, S.~Coogan, M.~Egerstedt, G.~Notomista, K.~Sreenath, and P.~Tabuada,
  ``Control barrier functions: Theory and applications,'' in \emph{European
  Control Conference}, 2019, pp. 3420--3431.

\bibitem{wabersich2023data}
K.~P. Wabersich, A.~J. Taylor, J.~J. Choi, K.~Sreenath, C.~J. Tomlin, A.~D.
  Ames, and M.~N. Zeilinger, ``Data-driven safety filters: Hamilton-jacobi
  reachability, control barrier functions, and predictive methods for uncertain
  systems,'' \emph{IEEE Control Systems Magazine}, vol.~43, no.~5, pp.
  137--177, 2023.

\bibitem{garg2024advances}
K.~Garg, J.~Usevitch, J.~Breeden, M.~Black, D.~Agrawal, H.~Parwana, and
  D.~Panagou, ``Advances in the theory of control barrier functions: Addressing
  practical challenges in safe control synthesis for autonomous and robotic
  systems,'' \emph{Annual Reviews in Control}, vol.~57, p. 100945, 2024.

\bibitem{shi2025distributed}
K.~Shi, J.~Liu, J.~Chang, and G.~Hu, ``Distributed quadratic programming for
  safe human multirobot interaction under task constraints,'' \emph{IEEE/ASME
  Transactions on Mechatronics}, pp. 1--13, 2025.

\bibitem{glotfelter2017nonsmooth}
P.~Glotfelter, J.~Cortés, and M.~Egerstedt, ``Nonsmooth barrier functions with
  applications to multi-robot systems,'' \emph{IEEE Control Systems Letters},
  vol.~1, no.~2, pp. 310--315, 2017.

\bibitem{xu2018constrained}
X.~Xu, ``Constrained control of input-output linearizable systems using control
  sharing barrier functions,'' \emph{Automatica}, vol.~87, pp. 195--201, 2018.

\bibitem{xiao2022sufficient}
W.~Xiao, C.~A. Belta, and C.~G. Cassandras, ``Sufficient conditions for
  feasibility of optimal control problems using control barrier functions,''
  \emph{Automatica}, vol. 135, p. 109960, 2022.

\bibitem{isaly2024feasibility}
A.~Isaly, M.~Ghanbarpour, R.~G. Sanfelice, and W.~E. Dixon, ``On the
  feasibility and continuity of feedback controllers defined by multiple
  control barrier functions,'' \emph{IEEE Transactions on Automatic Control},
  vol.~69, no.~11, pp. 7326--7339, 2024.

\bibitem{parwana2023feasible}
\BIBentryALTinterwordspacing
H.~Parwana, M.~Black, B.~Hoxha, H.~Okamoto, G.~Fainekos, D.~Prokhorov, and
  D.~Panagou, ``Feasible space monitoring for multiple control barrier
  functions with application to large scale indoor navigation,'' 2023.
  [Online]. Available: \url{https://arxiv.org/abs/2312.07803v1}
\BIBentrySTDinterwordspacing

\bibitem{ong2023nonsmooth}
P.~Ong, B.~Capelli, L.~Sabattini, and J.~Cort{\'e}s, ``Nonsmooth control
  barrier function design of continuous constraints for network connectivity
  maintenance,'' \emph{Automatica}, vol. 156, p. 111209, 2023.

\bibitem{dai2023safe}
B.~Dai, R.~Khorrambakht, P.~Krishnamurthy, V.~Gon{\c{c}}alves, A.~Tzes, and
  F.~Khorrami, ``Safe navigation and obstacle avoidance using differentiable
  optimization based control barrier functions,'' \emph{IEEE Robotics and
  Automation Letters}, vol.~8, no.~9, pp. 5376--5383, 2023.

\bibitem{wei2024collision}
\BIBentryALTinterwordspacing
S.~Wei, R.~Khorrambakht, P.~Krishnamurthy, V.~M. Gon{\c{c}}alves, and
  F.~Khorrami, ``Collision avoidance for convex primitives via differentiable
  optimization based high-order control barrier functions,'' 2024. [Online].
  Available: \url{https://arxiv.org/abs/2410.19159}
\BIBentrySTDinterwordspacing

\bibitem{thirugnanam2025control}
\BIBentryALTinterwordspacing
A.~Thirugnanam, J.~Zeng, and K.~Sreenath, ``Control barrier functions for
  collision avoidance between strongly convex regions,'' 2025. [Online].
  Available: \url{https://arxiv.org/abs/2306.13259}
\BIBentrySTDinterwordspacing

\bibitem{wu2025optimization}
S.~Wu, Y.~Fang, N.~Sun, B.~Lu, X.~Liang, and Y.~Zhao, ``Optimization-free
  smooth control barrier function for polygonal collision avoidance,''
  \emph{IEEE Transactions on Cybernetics}, pp. 1--13, 2025. DOI:
  10.1109/TCYB.2025.3578441.

\bibitem{thirugnanam2022duality}
A.~Thirugnanam, J.~Zeng, and K.~Sreenath, ``Duality-based convex optimization
  for real-time obstacle avoidance between polytopes with control barrier
  functions,'' in \emph{2022 American Control Conference (ACC)}.\hskip 1em plus
  0.5em minus 0.4em\relax IEEE, 2022, pp. 2239--2246.

\bibitem{thirugnanam2023nonsmooth}
\BIBentryALTinterwordspacing
------, ``Nonsmooth control barrier functions for obstacle avoidance between
  convex regions,'' 2023. [Online]. Available:
  \url{https://arxiv.org/abs/2306.13259v1}
\BIBentrySTDinterwordspacing

\bibitem{wiltz2023construction}
A.~Wiltz, X.~Tan, and D.~V. Dimarogonas, ``Construction of control barrier
  functions using predictions with finite horizon,'' in \emph{2023 62nd IEEE
  Conference on Decision and Control (CDC)}.\hskip 1em plus 0.5em minus
  0.4em\relax IEEE, 2023, pp. 2743--2749.

\bibitem{still2018lectures}
G.~Still, ``Lectures on parametric optimization: An introduction,''
  \emph{Optimization Online}, p.~2, 2018.

\bibitem{glotfelter2020nonsmooth}
P.~Glotfelter, J.~Cort{\'e}s, and M.~Egerstedt, ``A nonsmooth approach to
  controller synthesis for boolean specifications,'' \emph{IEEE Transactions on
  Automatic Control}, vol.~66, no.~11, pp. 5160--5174, 2020.

\bibitem{ghanbarpour2022optimal}
M.~Ghanbarpour, A.~Isaly, R.~G. Sanfelice, and W.~E. Dixon, ``Optimal safety
  for constrained differential inclusions using nonsmooth control barrier
  functions,'' \emph{IEEE Control Systems Letters}, vol.~7, pp. 1303--1308,
  2022.

\bibitem{boyd2004convex}
S.~Boyd and L.~Vandenberghe, \emph{Convex optimization}.\hskip 1em plus 0.5em
  minus 0.4em\relax Cambridge university press, 2004.

\bibitem{agrawal2019differentiable}
A.~Agrawal, B.~Amos, S.~Barratt, S.~Boyd, S.~Diamond, and J.~Z. Kolter,
  ``Differentiable convex optimization layers,'' \emph{Advances in Neural
  Information Processing Systems}, vol.~32, 2019.

\bibitem{solodov2010constraint}
M.~V. Solodov \emph{et~al.}, ``Constraint qualifications,'' \emph{Wiley
  Encyclopedia of Operations Research and Management Science. Wiley, New York},
  2010.

\bibitem{weibel2007minkowski}
C.~Weibel, ``Minkowski sums of polytopes: Combinatorics and computation,''
  Ph.D. dissertation, EPFL, 2007.

\bibitem{caron2018pypoman}
\BIBentryALTinterwordspacing
S.~Caron, ``Pypoman, a python module for polyhedral manipulations,'' 2018.
  [Online]. Available: \url{https://scaron.info/doc/pypoman/}
\BIBentrySTDinterwordspacing

\bibitem{luenberger1984linear}
D.~G. Luenberger, Y.~Ye \emph{et~al.}, \emph{Linear and nonlinear
  programming}.\hskip 1em plus 0.5em minus 0.4em\relax Springer, 2016.

\bibitem{cohen2023characterizing}
M.~H. Cohen, P.~Ong, G.~Bahati, and A.~D. Ames, ``Characterizing smooth safety
  filters via the implicit function theorem,'' \emph{IEEE Control Systems
  Letters}, vol.~7, pp. 3890--3895, 2023.

\bibitem{tan2021high}
X.~Tan, W.~S. Cortez, and D.~V. Dimarogonas, ``High-order barrier functions:
  Robustness, safety, and performance-critical control,'' \emph{IEEE
  Transactions on Automatic Control}, vol.~67, no.~6, pp. 3021--3028, 2022.

\bibitem{best1990stability}
M.~J. Best and N.~Chakravarti, ``Stability of linearly constrained convex
  quadratic programs,'' \emph{Journal of Optimization Theory and Applications},
  vol.~64, no.~1, pp. 43--53, 1990.

\bibitem{bonnans2013perturbation}
J.~F. Bonnans and A.~Shapiro, \emph{Perturbation analysis of optimization
  problems}.\hskip 1em plus 0.5em minus 0.4em\relax Springer Science \&
  Business Media, 2013.

\end{thebibliography}
\end{document}